\newcommand{\Z}{\mathbb Z}
\newcommand{\N}{\mathbb N}
\newcommand{\G}{\mathcal G}
\renewcommand{\H}{{\mathcal G}_{\frac 12}}
\newcommand{\E}{\mathbb E}
\newcommand{\num}{{\sf num}}
\newcommand{\aas}{a.a.s.\ }
\DeclareMathOperator{\length}{len}
\newcommand{\PP}{{\textrm{ \textcircled{\sc i}}}\xspace}
\newcommand{\QQ}{{\textrm{ \textcircled{\sc j}}}\xspace}
\newcommand{\D}{\mathcal{D}}
\newcommand{\sdot}{\! \cdot \!}
\newcommand{\oo}{\infty}
\newcommand{\M}{\mathfrak{m}}
\newtheorem{thm}{Theorem}
\newtheorem{lem}[thm]{Lemma}
\newtheorem{prop}[thm]{Proposition}
\newtheorem{step}{Step}
\newtheorem*{extrastep}{Step 2.5}
\newtheorem{cor}[thm]{Corollary}
\newtheorem{remark}[thm]{Remark}
\newcommand{\alphabetafill}%
{%
\begin{tikzpicture}[yscale=1.3]
\path [fill=blue!10]
  (0,0) rectangle (2.6667,1.5);
\path [fill=red!10]
  (8,0) rectangle (10.667,1.5);
\node (a1) at (1.3333,0.75) {\strut hyperbolic};
\node (a2) at (9.3333,0.75) {\strut trivial};
\node (a3) at (5.3333,0.75) {?};
\draw[->] (-.1,0) -- (10.817,0) node[below] {$\alpha$};
\draw[->] (0,-.1) -- (0,1.65) node[left] {$\beta$};
\node (x0) at (0,-.3) {$0$};
\node (y0) at (-.3,0) {$0$};
\node (y1) at (-.4,1) {$1$};
\node (x1) at (8,-.3) {$1$};
\node (x13) at (2.6667,-.3) {\footnotesize $1/3$};
\node (y23) at (-.5,0.33333) {\footnotesize $1/3$};
\draw (-0.1,1) edge (0.1,1);
\draw (8,-.1) edge (8,.1);
\draw (2.6667,-.1) edge (2.6667,.1);
\draw (-.1,0.33333) edge (.1,0.33333);
\draw [blue,ultra thick] (2.6667,1.5)  -- (2.6667,0.33333);
\draw [dashed, blue, ultra thick]  (2.6667,0)  edge (2.6667,0.33333);
\node [fill=blue, rounded corners] at (2.6667,0.33333) {};
\draw [red,ultra thick] (8,1) -- (8,0);
\draw [dashed, red, ultra thick] (8,1)  edge (8,1.5);
\node [fill=white, draw=red, very thick, rounded corners] at (8,1) {};
\end{tikzpicture}
}
\begin{document}

\begin{abstract}
In the theory of random groups, we consider presentations with any fixed number $m$ of generators
 and many random relators of length $\ell$, sending $\ell\to\infty$.
If $d$ is a ``density" parameter
measuring the rate of exponential growth of the number of relators compared to the length of relators, then
many group-theoretic properties become generically true or generically false at different values of $d$.
The signature theorem for this density model is a phase transition from triviality to hyperbolicity:
for $d<1/2$, random groups are \aas infinite hyperbolic, while for $d>1/2$, random groups
are \aas order one or  two.  We study random groups at the density threshold $d=1/2$.
Kozma had found  that trivial groups are generic for a range of growth rates at $d=1/2$; 
we show that infinite hyperbolic groups are  generic in a different range.
(We include an exposition of Kozma's previously unpublished argument,
with slightly improved results, for completeness.)
\end{abstract}

\title[A sharper threshold for random groups at density one-half]{A sharper threshold for random groups\\ at density one-half}
\author%
[Duchin Jankiewicz Kilmer  Leli\`evre Mackay S\'anchez]%
{Moon Duchin, Kasia Jankiewicz, Shelby C. Kilmer, \\
Samuel Leli\`evre, John M. Mackay, Andrew P. S\'anchez}
\date{\today}
\maketitle

\section{Introduction}
We will study random groups on $m$ generators,
given by choosing relators of length $\ell$ through
a random process.
For a function $\num:\N \to \N$,
let $\G(m,\ell,\num)$ be the probability space of group presentations with $m$ generators and with
$|R|=\num(\ell)$ relators of length $\ell$ chosen independently and uniformly from the 
$(2m)(2m-1)^{\ell -1}\approx (2m-1)^\ell$ possible freely reduced words
of length $\ell$.
Then the usual {\em density model of random groups} is the special case  $\num(\ell)=(2m-1)^{d\ell}$, and the parameter $0\le d\le 1$ is
called the {\em density}.
We will generalize in a natural way by defining
$$\D:=\frac 1\ell \log_{2m-1}(\num(\ell))$$
and saying that the {\em (generalized) density} is $d=\lim_{\ell\to\infty} \D$.

The foundational theorem in the area of random groups is the result of 
Gromov and Ollivier \cite[Thm 11]{Ollivier05} that $d=1/2$ is
the threshold for a phase transition between hyperbolicity 
and triviality.  To speak more precisely, the theorem is 
that for
any $\num$ with $d>1/2$,
a presentation chosen
uniformly at random from $\G(m,\ell,\num)$ will be isomorphic to $1$ or $\Z/2\Z$ with probability tending to $1$ as $\ell\to \infty$;
on the other hand, if $d(\num)<1/2$, a presentation chosen in the same manner will be an infinite, torsion-free, word-hyperbolic group with probability tending to $1$ as $\ell\to \infty$.
(From now on, we will say that a property of random groups is {\em asymptotically almost sure}
(or a.a.s.) for a certain $m$ and $\num$ if its probability tends to $1$
as $\ell\to\infty$.)

Here, we study the sharpness of this phase transition.
Letting $\D=1/2-f(\ell)$ for $f(\ell) = o(1)$ 
lets us use these functions $f$ to parametrize all
cases with generalized density $1/2$.
For simplicity of notation, where $m$ is understood to be fixed in advance, let us write
$\H(f)=\G\left(m,\ell,\  (2m-1)^{\ell(\frac 12-f(\ell))}\right)$.  Constant values of $f(\ell)$ 
change the density, but in the $f(\ell)\to 0$ case 
we show here that the properties of random groups
in $\H(f)$ will depend on the rate of vanishing.

\begin{thm}\label{bigtheorem}
Consider the density $1/2$ model $\H(f)$ for various $f(\ell)=o(1)$.
$$\begin{cases}
G\in \H(f) \text{ \aas infinite hyperbolic, for }
& f(\ell) \ge 10^5 \sdot {\log^{1/3}(\ell)}/{\ell^{1/3}};\\
G\in \H(f) \text{ \aas $\cong 1$ or $\Z/2\Z$, for }
& f(\ell) \le {\log(\ell)}/{4\ell}-{\log \log (\ell)}/{\ell}.
\end{cases}$$
\end{thm}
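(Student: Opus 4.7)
The theorem splits into two independent statements; I will address them separately, since they require very different techniques (isoperimetry/small cancellation versus a probabilistic birthday argument), and together they pinch the behavior at density $1/2$ from above and below.

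For the hyperbolicity case, the plan is to adapt Ollivier's van Kampen diagram argument, which establishes a linear isoperimetric inequality a.a.s.\ whenever $d<1/2$. Given a reduced diagram $D$ with $n$ faces, each of boundary length $\ell$, one bounds the probability that the random relators of $\H(f)$ can label $D$ by $\num^{n}\cdot (2m-1)^{-E}$, where $E$ is the number of internal edges; $E$ in turn is controlled below by an Euler/boundary estimate of the form $E \ge \tfrac{1}{2}\ell n - \tfrac{1}{2}|\partial D|$. For the usual density model with $d<1/2$, this leaves an exponentially small surplus per face. At the threshold we only have slack $(2m-1)^{2\ell n f(\ell)}$, which must beat the count of abstract diagrams with $n$ faces and specified boundary — a count polynomial in $\ell$ of degree roughly linear in $n$, with an extra $C^{n}$ topological factor. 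Summing a union bound over $n$ and over possible boundary lengths is where the three-way optimization enters: one balances the number of faces $n$, the slack $\ell n f(\ell)$, and the $\log \ell$ factors from the diagram count, and the sharp rate $f\gtrsim (\log \ell/\ell)^{1/3}$ emerges from this balance. The main obstacle is bookkeeping of constants in Ollivier's estimate tightly enough that the additional polylog factors are absorbed; the extra factor $10^5$ in the statement suggests this is exactly where slack is spent.

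For the triviality case, following Kozma, the idea is to iteratively shorten the relators by a birthday-style collision argument. With $N=(2m-1)^{\ell(1/2-f(\ell))}$ uniform random relators of length $\ell$, the expected number of pairs sharing a common prefix of length $k$ is $\binom{N}{2}(2m-1)^{-k}\asymp (2m-1)^{\ell-2\ell f(\ell)-k}$, which is large for $k$ slightly less than $\ell(1-2f(\ell))$. For each such collision $r_1=u v_1$, $r_2=u v_2$, the element $v_1 v_2^{-1}$ of length $\le 2(\ell-k)\approx 4\ell f(\ell)$ lies in the normal closure. Repeating this reduction with the freshly produced short elements, and bounding the size and independence of the families that survive each round, one hopes after $O(\log \ell)$ rounds to produce a freely reduced word of length $1$ in the normal closure, which kills a generator; a symmetric argument on the remaining generators collapses the group to $1$ or $\Z/2\Z$.

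The delicate step is precisely the analysis of this iteration: the words produced in round $j+1$ are no longer independent uniform samples, so the birthday lower bound must be replaced by a concentration argument on the collision multigraph (e.g., tracking, for each prefix $u$, the size of the fiber of relators with prefix $u$ via a Poisson/Chernoff estimate), and one must control the depletion of distinct words as lengths shrink. The quantitative threshold $f(\ell)\le \log \ell/(4\ell)-\log\log \ell/\ell$ is chosen so that this iteration still has enough surplus collisions in the \emph{first} round — the leading term $\log\ell/(4\ell)$ gives $4\ell f \approx \log\ell$, leaving roughly $\log\ell$-length words to kill via a short second stage, and the $-\log\log \ell/\ell$ correction provides the slack needed for the a.a.s.\ concentration bounds. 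I expect the bookkeeping of independence across rounds — rather than any single ingredient — to be the main technical obstacle.
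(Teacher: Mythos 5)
For the hyperbolicity half, your outline is in the spirit of the paper but you gloss over the step that actually makes the union bound converge. At density $1/2$ the per-face surplus is only $(2m-1)^{O(\ell f(\ell))}$, which is subexponential, while the number of abstract diagrams with $n$ faces grows like $\ell^{O(n)}n^{O(n)}$; a direct union bound over all $n$ therefore diverges. The paper instead proves a \emph{quadratic} isoperimetric inequality only for diagrams in a bounded window $K^2/4\leq|D|\leq 480K^2$ and then invokes the Papasoglu/Ollivier local-to-global principle (Lemma~\ref{isoperimetric-inequality}) to promote it to a linear inequality for all diagrams. The ``three-way optimization'' you describe is really the simultaneous choice of the window parameter $K(\ell)$ and of $f(\ell)$ so that the union bound over the \emph{finite} window tends to $0$; that is where the exponent $1/3$ in $f \gtrsim (\log\ell/\ell)^{1/3}$ comes from. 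Without explicitly restricting to a window of sizes, your argument as written does not close.

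For the triviality half you propose something genuinely different---an iterated birthday argument---and as sketched it has a gap. After the first round of collisions, the number of short trivial words produced is of order $\binom{N}{2}(2m-1)^{-k}\approx(2m-1)^{\ell-2\ell f-k}$, while these words have length about $2(\ell-k)$. For a second birthday round you would need their number to exceed $\sqrt{(2m-1)^{2(\ell-k)}}=(2m-1)^{\ell-k}$, which requires $-2\ell f>0$, an impossibility. Near the threshold $k\approx\ell-2\ell f$ the first round produces only a bounded number of short words, so the cascade never launches, and the dependence bookkeeping you flag as the ``main obstacle'' is not the real issue. The paper's (Kozma's) mechanism is different in kind: one extracts a \emph{single} short trivial word $w$ of length $2k$ (from a suffix collision, with care that the two relators differ at position $k$ so that $w\neq 1$), and then, in Steps~2--3 of the proof of Theorem~\ref{triviality}, shows that $w$ occurs as a cancellable subword in a constant fraction of blocks of each \emph{original} relator $r$, so that $w$-reductions shorten $r$ by $\approx bk/2$ with probability $>1/3$. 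One then runs a \emph{second, single} birthday argument on the shortened set $\bar R$ to find two shortened relators differing only in their first letter, giving $x=_G y$ for all generators and their inverses. Passing from one short trivial word to a wholesale shortening of all the relators---raising the effective density in one step---is the key idea your iteration scheme does not capture.
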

Here and in the rest of the paper logarithms are taken base $2m-1$ and a group isomorphic
to $1$ or $\Z/2\Z$ is called ``trivial."  
Theorem~\ref{bigtheorem} is illustrated in Figures \ref{slider} and \ref{alphabeta}.

\begin{figure}[ht]
\begin{tikzpicture}
\draw (-4,0)--(4,0);
\node at (0,0) [above] {$ \D $};
\node at (0,-1.33) {$  f(\ell) $};
\node at (-5,0) {$f(\ell)\to 0$};
\node at (-5,-.33) {slowly};
\node at (5,0) {$f(\ell)\to 0$};
\node at (5,-.33) {fast};

\draw (-1.5,.5)--(-1.5,-1) node [below]
  {$\frac{\log ^{1/3} \ell }{\ell ^ {1/3}}$};
\draw (2.5,.5)--(2.5,-1) node [below] {$\frac 1\ell$};
\draw (1.5,.5)--(1.5,-1) node [below] {$\frac{\log \ell}{4\ell}$};
\draw [blue, line width=10pt,-latex] (-1.5,0)--(-4,0);
\draw [red, line width=10pt,-latex] (1.5,0)--(4,0);

\node at (-1.5,1) [left] {$G$ infinite hyperbolic};
\node at (1.7,1) [right] {$G$ trivial};

\end{tikzpicture}
\caption{We study density $1/2$ by taking
$\num(\ell)=(2m-1)^{\ell(\frac 12 -f(\ell))}$ relators
for various functions $f(\ell)=o(1)$.}
\label{slider}
\end{figure}
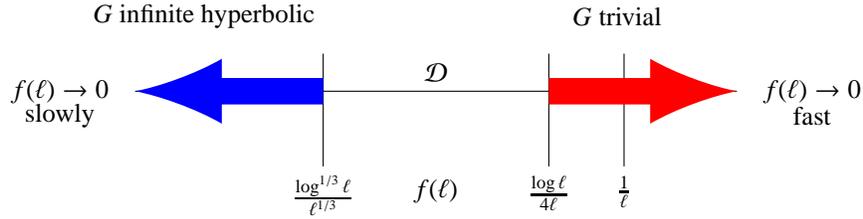

\begin{figure}[ht]
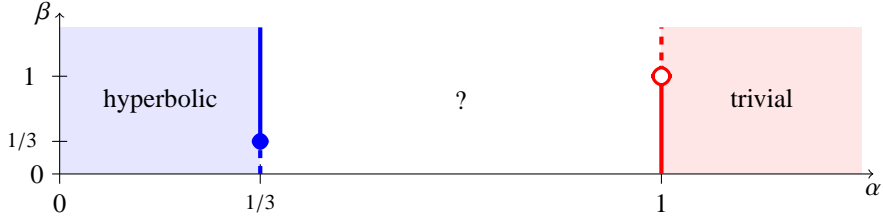

%
%
%
\alphabetafill
%
\caption{A finer view, taking $f(\ell) = f_{\alpha\beta}(\ell)
= \frac{\log^{\beta} (\ell)}{\ell ^{\alpha} }$.}
\label{alphabeta}
\end{figure}
To interpret
Figure \ref{alphabeta}, note that $\log^\beta (\ell) \ll \ell$ for all $\beta$.
If $G\in\H(f_{\alpha\beta})$ is
\aas trivial, then $G'\in\H(f_{\alpha'\beta'})$ is \aas trivial as well
whenever $\alpha'>\alpha$
or $\alpha'=\alpha$, $\beta'<\beta$.
Similarly if $G\in\H(f_{\alpha\beta})$ is \aas hyperbolic, then the same is true of
 $G'\in\H(f_{\alpha'\beta'})$ whenever $\alpha'<\alpha$ or $\alpha'=\alpha$, $\beta'>\beta$.

This implies in particular 
that setting $d=1/2$ in the classical Gromov
model (which corresponds to $f=0$) 
gives \aas trivial groups.

In unpublished notes from around 2010, Gady Kozma had given an argument for 
triviality at density $1/2$.  We give an expanded exposition here.
By tracking through Kozma's argument as sharply as possible, 
we find triviality at $f(\ell)={\log(\ell)}/{4\ell}-{\log \log (\ell)}/{\ell}$, which corresponds
to any number of relators greater than 
$$
\num(\ell)=(2m-1)^{\ell(\frac 12 - f(\ell))}=(2m-1)^{\frac 12 \ell}\sdot \log(\ell) \sdot \ell^{-1/4}<(2m-1)^{\frac 12 \ell}\sdot  \ell^{-\nicefrac 14+\epsilon}
$$
for any $\epsilon>0$.
(This is slightly sharper than Kozma's conclusion, and he notes that such a result---with a power of $\ell$
factor as we have here---would be interesting.)

On the other hand, our hyperbolicity result applies for any number of relators at most 
$$
\num(\ell)=(2m-1)^{\ell(\frac 12 - 10^5\ell^{-1/3}\log^{1/3}\ell)}= (2m-1)^{\frac 12 \ell}\sdot
(2m-1)^{-10^5 \ell^{2/3}\log^{1/3}(\ell)},
$$
i.e., where $(2m-1)^{\frac 12 \ell}$ is divided by a factor that is intermediate between
polynomial and exponential.
In that case we obtain 
\begin{thm}
For a sufficiently large constant $c$, 
a random group in $\H(f_{\frac 13 \frac 13})$ is \aas $\delta$--hyperbolic with 
$\delta=c\ell^{5/3}$.
\end{thm}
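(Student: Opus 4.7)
The plan is to read off a quantitative $\delta$-hyperbolicity constant from the isoperimetric inequality that already drives the hyperbolicity half of Theorem~\ref{bigtheorem}, rather than proving anything new at the probabilistic level.

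First, I would revisit the proof of the hyperbolicity statement of Theorem~\ref{bigtheorem} and extract the precise isoperimetric inequality it establishes. The standard Ollivier-style union bound over combinatorial types of reduced van Kampen diagrams shows that a diagram with $k$ cells and boundary $b$ occurs with expected multiplicity at most $\ell^{k}\cdot (2m-1)^{-k\ell f(\ell)+b/2}$ (times a combinatorial count of diagram types). At $f=f_{\frac 13 \frac 13}$, the margin $\ell f(\ell)=\ell^{2/3}\log^{1/3}(\ell)$ is just large enough that, \aas every reduced van Kampen diagram $D$ of area at most some threshold $A_0=A_0(\ell)$ satisfies
$$|\partial D|\;\geq\; K\cdot\mathrm{Area}(D)\qquad\text{with}\qquad K\;\asymp\;\ell\cdot f(\ell)\;=\;\ell^{2/3}\log^{1/3}(\ell).$$
Careful tracking of the union bound shows $A_0$ can be taken of order $1/f(\ell)^{2}=\ell^{2/3}/\log^{2/3}(\ell)$, the largest area at which the combined probabilistic estimates still close.

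Second, I would invoke a local-to-global hyperbolicity criterion of Gromov--Papasoglu type: if all reduced disk diagrams of area at most $A_0$ satisfy a linear isoperimetric inequality, then the group is globally $\delta$-hyperbolic with $\delta$ controlled by the combinatorial diameter of those diagrams. Since each $2$-cell has perimeter at most $\ell$, a diagram of area $\leq A_0$ has combinatorial diameter $O(\ell\cdot A_0)$, so $\delta\leq C\cdot\ell\cdot A_0$ for a universal constant $C$. Plugging in yields
$$\delta\;\leq\;C\cdot\ell\cdot\frac{\ell^{2/3}}{\log^{2/3}(\ell)}\;=\;C\cdot\frac{\ell^{5/3}}{\log^{2/3}(\ell)}\;\leq\;c\ell^{5/3}$$
for any sufficiently large $c$ and all $\ell$ large enough.

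The main obstacle is matching the area threshold $A_0$ produced by the probabilistic estimates with the one required by the local-to-global criterion. At the critical scale $f=f_{\frac 13 \frac 13}$ the margin in the union bound is razor-thin, so one has to check that $A_0\asymp 1/f(\ell)^{2}$ is simultaneously (a) achievable by the probabilistic argument underlying Theorem~\ref{bigtheorem} and (b) large enough that the Gromov--Papasoglu conclusion applies with a global $\delta$ depending only on $\ell$ and $A_0$. Conceptually the outline is standard, but bookkeeping the constants (in particular, absorbing the $10^{5}$ constant from Theorem~\ref{bigtheorem} into $c$) is where the real work lies.
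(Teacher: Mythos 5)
Your quantitative picture is right (and matches the paper's): at $f=f_{\frac 13\frac 13}$ the probabilistic union bound closes for diagrams in an area window of size roughly $A_0\asymp 1/f(\ell)^2\asymp \ell^{2/3}/\log^{2/3}\ell$, and one ends up with $\delta \asymp \ell\cdot A_0 \asymp \ell^{5/3}$. But the step that turns the local isoperimetric information into a $\delta$ constant is not a theorem as you've stated it, and this is where your proof has a genuine gap.

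You invoke a ``local-to-global criterion: if all disk diagrams of area at most $A_0$ satisfy a linear isoperimetric inequality, then the group is $\delta$-hyperbolic with $\delta$ controlled by the combinatorial diameter of those diagrams.'' That is not what Gromov--Papasoglu--Bowditch say. The local-to-global principle the paper uses (Lemma~\ref{isoperimetric-inequality}, i.e.\ Ollivier's version of Papasoglu) has a \emph{quadratic} inequality on a window of areas $[\frac{K^2}{4},480K^2]$ as hypothesis, and its conclusion is a \emph{linear} isoperimetric inequality for all diagrams of area $\geq K^2$, with an explicit constant $\kappa^{-1}=\frac{\ell}{10^4K}$. One then still needs a second, separate quantitative ingredient to pass from a linear isoperimetric inequality (valid above some area threshold) to a thin-triangles constant; the paper proves this as Theorem~\ref{hyperbolicity-constant}, a quantitative version of the Bridson--Haefliger argument, which gives $\delta=120\kappa^2 N^3$ when faces have boundary length at most $N$. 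Plugging in $\kappa\asymp K/\ell$ and $N=\ell$ gives $\delta\asymp K^2\ell=A_0\ell$, which matches your number, but that coincidence is an artifact of the formula $\kappa^2N^3$, not a direct ``$\delta$ is at most the diameter of the test diagrams'' statement. Your diameter bound $O(\ell A_0)$ is a valid upper bound on the diameter of an area-$A_0$ diagram, but nothing you cite or prove ties the global thin-triangles constant to that quantity; the argument needs the two-step derivation (window quadratic inequality $\Rightarrow$ global linear inequality $\Rightarrow$ thin triangles) to be carried out with explicit constants, as the paper does.

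A secondary point: your claimed probabilistic conclusion ``diagrams of area $\leq A_0$ satisfy $|\partial D|\geq K\cdot\mathrm{Area}(D)$'' is not the inequality that the union bound directly delivers and is weaker, for $|D|\ll A_0$, than the quadratic inequality $|\partial D|^2\geq 2\cdot10^4\ell^2|D|$ actually needed as input to the local-to-global lemma. You would need to check the quadratic form of the inequality in the window to run the Papasoglu-type argument.
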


By contrast, for $d<1/2$, the best known hyperbolicity constant is $\delta=c_d\ell$, for 
a coefficient depending on the density.

The proof for triviality given below follows Kozma in using two elementary probabilistic
ingredients:  a ``probabilistic pigeonhole principle" (Lemma~\ref{pigeons}) and a 
``decay of influence estimate" (Lemma~\ref{decay}).  These may be of independent interest, so 
they are formulated in \S\ref{ingredients}
in more generality than we need here.  The main idea is to find a single 
short word that is trivial in $G$ and use it to replace 
the relator set $R$ with an equivalent relator
set $\bar R$ with higher effective density.

For hyperbolicity, we follow Ollivier~\cite[Chapter V]{Ollivier05}
in proving a linear isoperimetric inequality, 
using the local-to-global principle of Gromov as shown by Papasoglu 
to argue that only a limited number 
of Van Kampen diagrams need to be checked, then quoting some classic results of Tutte on enumeration of planar
graphs to accomplish the necessary estimates.

The  sharpest phase transition  
that one could hope for is to have some precise subexponential function $g(\ell)$
and a pair of constants $c_1<c_2$ 
so that $\num(\ell)=c_1 (2m-1)^{\frac 12 \ell} g(\ell)^{-1}$ and $\num(\ell)=c_2 (2m-1)^{\frac 12 \ell} g(\ell)^{-1}$
yield the hyperbolic and trivial cases, respectively.  We hope that in future work we will be able 
to obtain further refined estimates to ``close the gap."

After completing this project we learned of the 2014 preprint \cite{AFL} which 
considers very similar threshold sharpness questions for a different model of random groups,
 called the {\em triangular model}, in which all relators have length three.  
They  find a one-sided threshold for hyperbolicity and show that triviality admits a very sharp phase transition in a sense 
similar to our sense above.
However, hyperbolicity is not known to have such a sharp threshold in either model, and furthermore there is no guarantee that the 
hyperbolicity and triviality thresholds would agree, as we conjecture that they do.

\subsection{Conventions}
We will write $1$ for the group $\{1\}$ and will sometimes
use the term {\em trivial} to mean isomorphic to either
$1$ or $\Z/2\Z$.  Throughout the note, when we show that
groups are \aas ~{\em hyperbolic}, we are proving the same
strong isoperimetric inequality as for the $d<1/2$ 
case, so the groups in our hyperbolic range 
are infinite, and furthermore torsion-free, one-ended,
with Menger curve boundary.  

Since we are concerned with exponential growth with
base $(2m-1)$, 
$\log$ will mean $\log_{2m-1}$. 

We will use $c, c', c''$
for constants whose values depend on context and  $K, k$
for functions of $\ell$.  As usual,
denote $ f / g \to \infty$ by $f \gg g$.
Write $[n]$ for $\{1,\ldots,n\}$.

For a word $r$ of length $\ell$ we denote by
 $r[i]$ ($1\leq i \leq \ell$) the $i$th letter of $r$,
and write 
$r[i:j]$ (where $1\leq i<j\leq \ell$) for the subword $r[i]\, r[i+1]\cdots r[j]$ of $r$ (so that in particular $r=r[1:\ell]$).
For words $r, r'$ we write
$r=r'$ if $r, r'$ are the same words after free reduction, and 
$r=_G r'$ if $r,r'$ represent the same element of 
group $G$.

As mentioned above, we work with reduced words that need not be
cyclically reduced.  For models of random groups with cyclically
reduced words we expect that the same threshold bounds hold.

\subsection*{Acknowledgments}
We warmly acknowledge Gady Kozma for ideas and conversations.  The main part of this work 
was conducted during a research cluster supported by NSF CAREER-1255442.

\section{Some basic probabilistic facts}\label{ingredients}

\subsection{Distribution of letters in freely
reduced words}

Because the relators in these models of random groups
are chosen by the uniform distribution on freely
reduced words of a given length, it will sometimes
be useful to know the
conditional probability of seeing a particular letter
at a particular position in $r$,
given an earlier letter.

Let $m \ge 2$ be an integer and let $\M = 1/(2m-1)$.

For any positive integer $n$ let $s_n$ be the partial
sum of the alternating geometric series
$$1-\M+\M^2- \dots,$$
i.e., $s_n = \sum_{k=0}^{n-1} (-\M)^{k}$, and $s_0=0$.
Then  $\lim\limits_{n\to\oo} s_n = \frac{1}{1+\M}$.

The following lemma measures the decrease of influence of a
letter on its successors.

\begin{lem}[Distribution of letters]
Consider a random freely reduced infinite word
$w = x_0x_1x_2\dots$
in $m$ generators.
Then for $n > 0$,

$\begin{cases}
\Pr(x_n = x_0)=\M\sdot s_{n-1},& n \text{ even};\\
\Pr(x_n=y)=\M\sdot s_n,& n \text{ even}, y\neq x_0;\\
\Pr(x_n = x_0^{-1})=\M\sdot s_{n-1},& n \text{ odd};\\
\Pr(x_n=y)=\M\sdot s_n,& n \text{ odd}, y\neq x_0^{-1}.
\end{cases}$
\end{lem}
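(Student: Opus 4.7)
The plan is to prove the lemma by induction on $n$, exploiting the Markov property of the freely reduced random word and an invariant structural pattern that persists along the induction.

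First I would record the one-step recursion. Since a freely reduced word is built letter by letter, with $x_n$ uniform on the $2m-1$ letters distinct from $x_{n-1}^{-1}$, conditioning on $x_{n-1}$ and summing over all $2m$ possibilities yields
\[
\Pr(x_n = y) \;=\; \M\bigl(1 - \Pr(x_{n-1} = y^{-1})\bigr),
\]
because $y$ is a legal successor to every letter except $y^{-1}$. Alongside this I would record the matching scalar recursion for the partial sums, $s_n = 1 - \M s_{n-1}$, which is immediate from the definition of $s_n$.

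Next I would isolate the structural pattern that the induction preserves: at every step $n\ge 1$ there is a single \emph{special} letter whose probability differs from that of the $2m-1$ others, and the special letter at step $n$ is exactly the inverse of the special letter at step $n-1$. Indeed, if at step $n-1$ the special letter is some $z$ with probability $a$ while every other letter has probability $b$, the recursion above gives $z^{-1}$ probability $\M(1-a)$ at step $n$ and every other letter probability $\M(1-b)$. Since $a \ne b$ is preserved, the roles of "special" and "generic" alternate between $x_0^{-1}$ (for odd $n$) and $x_0$ (for even $n$), which is exactly the parity pattern in the statement.

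Finally I would run the induction. The base case $n=1$ is immediate from uniformity: $\Pr(x_1 = x_0^{-1}) = 0 = \M s_0$ and $\Pr(x_1 = y) = \M = \M s_1$ for every $y \neq x_0^{-1}$. For the inductive step, substituting the hypothesis into the one-step recursion and applying the identity $s_k = 1 - \M s_{k-1}$ converts $\M(1 - \M s_{k-1})$ into $\M s_k$ in each of the four cases, yielding precisely the claimed values at level $n$.

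There is no real obstacle here: the content is the coincidence of two linear recursions with common ratio $-\M$ whose initial data agree at $n=1$. The only care required is in the parity bookkeeping — keeping track of whether $y^{-1}$ lands on the distinguished letter at the previous step — and in confirming that the special/generic dichotomy indeed propagates.
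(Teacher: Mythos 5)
Your proof is correct and takes essentially the same approach the paper has in mind: the paper dismisses the lemma as "an easy induction," and your argument — setting up the one-step recursion $\Pr(x_n=y)=\M\bigl(1-\Pr(x_{n-1}=y^{-1})\bigr)$, matching it with $s_n=1-\M s_{n-1}$, verifying the base case $n=1$, and tracking the alternating special letter $x_0^{\pm 1}$ — is exactly that induction, carried out carefully.
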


The proof is an easy induction.
Note that as $n \to \oo$ the probability of each
letter appearing at the $n$th place tends to
$\frac{\M}{1+\M}=1/2m$,
recovering the uniform distribution, as one would expect.
We immediately deduce bounds on the conditional
probability of a later letter given an earlier letter.

\begin{cor}[Decay of influence]\label{decay}
For any letters $x,y$ (not necessarily distinct) and for any $n\geq 1$,
$P_n(x,y)=\Pr(x_n=x \mid x_0=y)$ is bounded between
$\M\sdot s_{n-1}$ and $\M\sdot s_n$, i.e.,
$$\begin{array}{lcll}
\M-\M^2+\dots +\M^{n-1}-\M^{n} &\le \ P_n(x,y)\, \le&
\M-\M^2+\dots +\M^{n+1}&(n \text{ even})\\
\M-\M^2+\dots -\M^{n+1} &\le\ P_n(x,y)\, \le &
\M-\M^2+\dots -\M^{n-1}+\M^n &(n \text{ odd}).
\end{array}$$
In particular,
$\frac{2m-2}{(2m-1)^2}\le
\Pr(x_2=x \mid x_0=y)\le \frac{1}{2m-1} .$
\end{cor}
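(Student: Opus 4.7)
The plan is to read the corollary off directly from the preceding Lemma. For any fixed letter $y$, by the symmetry of the uniform distribution on freely reduced words (no letter is preferred over another), the Lemma's ``unconditioned'' statement translates into an assertion about the conditional probability $\Pr(x_n = x \mid x_0 = y)$. Specifically, this conditional probability splits into two cases depending on whether $x$ matches a certain distinguished letter: $y$ itself when $n$ is even, and $y^{-1}$ when $n$ is odd. In the distinguished case the probability is $\M\sdot s_{n-1}$, and in the remaining case it is $\M\sdot s_n$. Consequently, for every pair $(x,y)$ and every $n\ge 1$, the quantity $P_n(x,y)$ is exactly one of the two numbers $\M\sdot s_{n-1}$ or $\M\sdot s_n$, and in particular it lies between them.

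The explicit alternating-sum inequalities displayed in the statement will then follow by expanding $s_n=\sum_{k=0}^{n-1}(-\M)^k$ and keeping track of signs according to the parity of $n$. The concluding $n=2$ assertion is an immediate evaluation: $\M\sdot s_1 = \M = \tfrac{1}{2m-1}$, while $\M\sdot s_2 = \M(1-\M) = \tfrac{2m-2}{(2m-1)^2}$.

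There is no real obstacle here: the corollary is essentially a repackaging of the Lemma as a two-sided bound, combined with the trivial observation that a number equal to one of two values lies between them. The only care required is bookkeeping the signs correctly when writing $\M\sdot s_{n-1}$ and $\M\sdot s_n$ as truncated alternating sums in each parity class, and verifying that the pair of values named by the Lemma exhausts all possibilities as $x$ ranges over all $2m$ letters.
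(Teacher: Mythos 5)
Your argument is correct and is exactly the paper's intended (one-line) proof: the Lemma, combined with the symmetry allowing you to replace the unconditioned probability by the conditioned one, shows that $P_n(x,y)$ equals one of $\M\sdot s_{n-1}$ or $\M\sdot s_n$ (with the first value occurring precisely when $x=y$ for $n$ even, or $x=y^{-1}$ for $n$ odd), hence lies between them, and your $n=2$ evaluation is right. One thing to watch when you carry out the promised sign bookkeeping: the displayed expansions in the statement appear to have a typo in the terminal exponent---the upper bound for $n$ even and the lower bound for $n$ odd should end at $\pm\M^{n-1}$ (namely $\M\sdot s_{n-1}$) rather than $\pm\M^{n+1}$---but the prose claim ``between $\M\sdot s_{n-1}$ and $\M\sdot s_n$'' and the $n=2$ consequence, which is all the paper uses, are correct as you found.
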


\subsection{A generalized ``probabilistic pigeonhole principle"}

Consider $z$ red balls and $z$ blue balls,
and so on for a total of $q$ colors.  Each of these $qz$ balls is
thrown at random into one of $n$ boxes, giving $[n]$-valued random variables
$x_1,\dots,x_{qz}$.
We bound the probability that there is some box with
balls of all colors.

\begin{lem}[Probabilistic
pigeonhole principle on $q$ colors]
\label{pigeons}
Let $\mu$ be any probability measure on $[n]$.  Fix arbitrary $q,z\in \N$ such that
$z\ge 2 n^{1-1/q}$.  Then if  
 $x_1, \dots, x_{qz}$ are chosen randomly and independently under $\mu$, 
\[ \Pr(\exists\,  i_1,i_2,\dots,i_q
\text{ with } (j-1)z < i_j \le jz, \ 
x_{i_1} = x_{i_2}= \dots = x_{i_q})
\geq 1 - e^{\textstyle{-cz/{n}^{1-1/q}}} \]
for any $c\le -\frac 14 \ln(1-2^{-q})$, or in particular
$c\le 2^{-q-2}$.
\end{lem}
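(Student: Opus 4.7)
The plan is to place the $q$ color groups of balls sequentially into $[n]$ and to inductively track the random set $C_j := A_1 \cap \cdots \cap A_j$ of boxes that have been hit by at least one ball from each of the first $j$ groups, where $A_j = \{x_i : (j-1)z < i \le jz\}$. The lemma then reduces to a lower bound on $\Pr(C_q \ne \emptyset)$. A natural preliminary reduction is to the case $\mu = $ uniform on $[n]$: a symmetrization/coupling argument shows that averaging any two atoms of $\mu$ (making $\mu$ closer to uniform) can only increase the failure probability, since spreading out the balls makes coincidences between colors rarer; in particular, concentrating $\mu$ on a single atom trivializes the problem.

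Assuming uniform $\mu$, I would claim inductively that $|C_j| \ge n(p/2)^j$ with high probability, where $p := 1-(1-1/n)^z$ satisfies $p \ge n^{-1/q}$ by the elementary inequality $1-(1-x)^z \ge \min(xz/2,\,1/2)$ together with the hypothesis $z \ge 2n^{1-1/q}$. Conditional on $C_{j-1}$, the indicators $\mathbf{1}[i \in A_j]$ for $i \in C_{j-1}$ are negatively associated (they are increasing functions of a multinomial count vector), so $|C_j| = \sum_{i \in C_{j-1}} \mathbf{1}[i \in A_j]$ obeys a one-sided multiplicative Chernoff bound: $\Pr(|C_j| \le p|C_{j-1}|/2 \mid C_{j-1}) \le \exp(-p|C_{j-1}|/8)$. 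A union bound over $j = 1,\ldots,q-1$ then yields $|C_{q-1}| \ge n(p/2)^{q-1}$ with total failure probability at most $q\exp(-\Omega(z/n^{1-1/q}))$.

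For the final step I would avoid concentration (which becomes too weak when $|C_q|$ is of constant order) and instead use the direct bound $\Pr(A_q \cap C_{q-1} = \emptyset \mid C_{q-1}) = (1-|C_{q-1}|/n)^z \le \exp(-|C_{q-1}|z/n)$. Substituting $|C_{q-1}| \ge n(p/2)^{q-1}$ and $p \ge z/(2n)$ gives an exponent of at least $z(z/(4n))^{q-1} \ge 2^{-(q-1)} z/n^{1-1/q}$ after simplifying with $z \ge 2n^{1-1/q}$, which matches the claimed tail with $c = 2^{-q-2}$ once combined with the iterated Chernoff failures. The main obstacle is careful bookkeeping of multiplicative constants across the $q-1$ iterated concentration steps, so that every intermediate failure probability is dominated by the final exponential $\exp(-cz/n^{1-1/q})$ with a uniform choice of $c$; secondary obstacles are justifying the negative-association Chernoff bound for the indicators $\mathbf{1}[i \in A_j]$ and formalizing the reduction to uniform $\mu$ (for which one needs to verify that the failure probability is in fact monotone under pairwise averaging of atoms).
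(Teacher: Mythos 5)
Your approach is genuinely different from the paper's and does not, as written, establish the claimed bound. The paper proves a constant ($>2^{-q}$) success probability for a single ``sub-experiment'' of $z_0 = \bigl(\sum_p \mu^q(p)\bigr)^{-1/q} \le n^{1-1/q}$ balls per color via a second-moment (Paley--Zygmund) argument, then chops each color's $z$ balls into $\rho \approx z/z_0 \ge z/n^{1-1/q}$ independent sub-batches and multiplies the per-batch failure probabilities to get $(1-2^{-q})^{\rho}$. It is this independent-repetition structure that makes the failure probability decay \emph{exponentially in $z$}. Your argument instead tracks the shrinking set $C_j$ of surviving boxes and applies Chernoff at each of the $q-1$ intersection steps, which is an interesting and more constructive route, but it has a structural problem that the boosting idea avoids.

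Concretely, the Chernoff failure at step $1$ is bounded by $\exp(-p n/8)$ where $p = 1-(1-1/n)^z$. As $z \to \infty$ with $n$ fixed, $p \to 1$, so this term converges to $\exp(-n/8)$ and stops improving; the same saturation occurs at each later step. The lemma, however, asserts a bound of $\exp(-cz/n^{1-1/q})$, which tends to $0$ as $z\to\infty$. So once $z$ exceeds roughly $n^{2-2/q}$, the union-bound terms from your iterated Chernoff steps dominate and exceed the claimed rate, and your overall bound cannot close. Your final step, $\Pr(A_q\cap C_{q-1}=\emptyset\mid C_{q-1}) = (1-|C_{q-1}|/n)^z$, does decay exponentially in $z$, but the earlier steps do not, and the union bound is only as strong as its weakest term. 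To repair this you would need some analogue of the paper's boosting: e.g., run your entire $q$-step chain on each of $\approx z/n^{1-1/q}$ disjoint blocks of balls, show each chain succeeds with probability bounded away from $0$, and multiply the independent failure probabilities. As stated, your outline does not achieve the lemma.

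A secondary, smaller issue is the reduction to uniform $\mu$. Your claim that averaging two atoms of $\mu$ can only increase the failure probability is an assertion of Schur-concavity of the failure probability in $\mu$, which is plausible (and consistent with small examples) but is not proven, and you correctly flag it as something to verify. The paper sidesteps this entirely: Hölder's inequality gives $z_0 \le n^{1-1/q}$ for any $\mu$, so no reduction to the uniform case is needed. If you retain your iterative approach, you will either need to carry out the Schur-concavity argument carefully or redo the per-step concentration for general $\mu$ (the indicators $\mathbf{1}[i\in A_j]$ then have heterogeneous means $1-(1-\mu(i))^z$, and the conditional expectation $\E[|C_j| \mid C_{j-1}]$ depends on which boxes survived, not just how many). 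Your negative-association justification for the Chernoff bound is fine.
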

Note that as $n\to\infty$ a $q$-color
coincidence is asymptotically almost sure
as long as
$z \gg n^{1-1/q}$, and in particular a 2-color coincidence occurs if $z\gg \sqrt n$.
We further remark that this is equivalent to another probabilistic pigeonhole principle
(that for $z \gg n^{1-1/q}$ uncolored balls in $n$ boxes, some box contains at least $q$ balls
a.a.s.),
in the sense that each applies the other.

\begin{proof}
We start by considering the case of a 3-color coincidence ($q=3$). Let
$$X := \# \{ (i_1, i_2, i_3) \mid
(j-1)z<i_j\le jz, \
x_{i_1} = x_{i_2} = x_{i_3} \}.$$ 
Since $X \geq 0$ we can bound $\Pr(X>0)$ using the classical inequality $\Pr(X>0) > \E^2[X]/\E[X^2]$.
We compute expectation by finding the probability of coincidence for some choice
of distinct $i_1, i_2, i_3$  and multiplying by $z^3$:
$$ \E[X] = z^3 \Pr(x_{i_1} = x_{i_2} = x_{i_3}) = z^3 \sum_{p=1}^n \mu^3 (p). $$
We next write $X=\sum_{i_1}\sum_{i_2}\sum_{i_3} \delta_{x_{i_1}=x_{i_2}=x_{i_3}}$ and reindex as
$X=\sum_{i_4}\sum_{i_5}\sum_{i_6} \delta_{x_{i_4}=x_{i_5}=x_{i_6}}$, so by symmetry we get 
\begin{align*} \E[X^2] = z^3 \Pr(x_{i_1} = x_{i_2} = x_{i_3}) &+ 3z^3 (z-1) \Pr(x_{i_1} = x_{i_2} = x_{i_3} = x_{i_4}) \\
&+ 3z^3(z-1)^2 \Pr(x_{i_1} = x_{i_2} = x_{i_3} = x_{i_4} =x_{i_5})\\
&+ z^3(z-1)^3\Pr(x_{i_1}=x_{i_2}=x_{i_3}, x_{i_4}=x_{i_5}=x_{i_6})
\end{align*}
with respect to any fixed $i_1,\dots,i_6$.

Using $1<r<s \Longrightarrow \| x \|_r \geq \| x \|_s$, 	we get
\[ \E[X^2] \leq z^3 \left (\sum_{p=1}^n \mu^3 (p) \right)^{3/3} + 3 z^4 \left( \sum_{p=1}^n \mu^3 (p) \right)^{4/3} + 3z^5 \left (\sum_{p=1}^n \mu^3 (p) \right)^{5/3} + z^6 \left (\sum_{p=1}^n \mu^3 (p) \right)^{6/3}.  \]

These expectation formulas easily generalize from 3 to any number $q$  of colors:
\[ \E[X] = z^q \sum_{p=1}^{n} \mu^q(p) \ \ ; \qquad
\E[X^2]\leq\sum_{i=0}^q \left[{q \choose i}\cdot z^{q+i}
\cdot \left( \textstyle{\sum_{p=1}^n \mu^q(p)}
\right)^{\frac{q+i}{q}} \right].  \]

The probability of a coincidence is at least $\E^2[X]/\E[X^2]$.
First let us consider a simple case, where the number
of balls of each color is chosen to get good cancellation:
set $z_0:= \left(\sum_{p=1}^n \mu^q (p) \right)^{-1/q}$, so that $1\le z_0\le n^{1-1/q}$.
Then we get 
$\Pr(X>0 \mid z\ge z_0)> 1/2^q$.

The general case is $z=\gamma z_0$ for arbitrary $\gamma$.
Divide up each of the intervals
$\bigl((j-1)z,jz\bigr]$ into subintervals of length $\lceil z_0 \rceil$, with the last
subinterval longer if necessary, and let $\rho$ be the number of subintervals (the hypothesis
that $z\ge 2 n^{1-1/q}$ ensures that $\gamma/4\le \rho\le \gamma$).  
Let $X_k$ count the number of  $q$-color coincidences which occur in the respective $k$th subintervals.  
The above calculation
tells us that $\Pr(X_k>0)>1/2^q$.

By H\"older's inequality,
we have
$$1= \sum_{p=1}^n \mu(p)
= \sum_{p=1}^n \mu(p)\cdot 1
\le \left(\sum_{p=1}^n \mu^q(p)\right)^{1/q}\cdot
n^{1-1/q}$$

Thus we have $\gamma \ge \dfrac{z}{n^{1-1/q}}$.
It follows that
$$
\Pr(X >0)\ge 1-\prod_{k=1}^\rho\left(\Pr(X_k=0)\right)
\ge 1-\left(1-2^{-q} \right)^{\gamma/4}
\ge 1-\left(1-2^{-q} \right)^{\textstyle{ \frac 14 \sdot \frac{z}{n^{1-1/q}}}}.\qedhere
$$
\end{proof}

\noindent We  emphasize that this result does not depend
on the choice of probability distribution $\mu$.

\section{The trivial range}

The usual proof that a random group $G$ is trivial at densities $d>1/2$ uses the
probabilistic pigeonhole principle to show that there are pairs of relators $r_1,r_2$
which have different initial letters $r_1[1]=x, r_2[1]=y$, but with
the remainder of the words equal.  
Consequently $r_1r_2^{-1}=xy^{-1}$ is trivial.
In this way one shows that a.a.s.\ all generators and their inverses are equal in $G$.

To show triviality at density $d=1/2$ is more involved.
The overall plan here is to find shorter
trivial words than the ones from relator set $R$; treating these as 
an alternate relator set
will push up the ``effective density" of $G$,
then a similar argument as before will show that the group is trivial.

\begin{thm}[Sufficient conditions for triviality]\label{triviality}
Given any $f(\ell)=o(1)$,
suppose there exists a function
$k: \N \to \N$  with $k(\ell)\le \ell$ for all $\ell$
and such that
\begin{equation}\tag{$\star$}\label{star}
k-2 \ell f  \to \infty
\end{equation}
and
\begin{equation}\tag{$\spadesuit$}\label{spade}
\frac{\ell - 2}{(2k+2)(2m-1)^{2k}} \to\infty
\end{equation}
as $\ell\to\infty$.
Then \aas $G\in \H(f)$ is $1$ or $\Z/2\Z$.
\end{thm}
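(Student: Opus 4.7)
The plan is to use the relators of $R$ to first extract a single short word $w$ with $w=_G 1$, and then use $w$ to produce a family of equivalent but shorter relators whose effective density strictly exceeds $1/2$. The classical high-density triviality argument (restricted to pairs of letters) then finishes the job.

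\emph{Step 1 (find $w$).} Split $R=R_1\sqcup R_2$ into two independent halves. For each $r\in R_1$ record its length-$(\ell-k)$ suffix $r[k+1:\ell]$. Condition~(\ref{star}) is exactly the statement $|R_1|\gg(2m-1)^{(\ell-k)/2}$, so a birthday-type second-moment estimate (essentially the $q=2$ case of Lemma~\ref{pigeons}) produces a.a.s.\ two distinct relators $r_1,r_2\in R_1$ sharing this suffix. Their product $r_1r_2^{-1}$ freely reduces to a word $w$ of length at most $2k$, and since both $r_i$ are trivial in $G$ we have $w=_G 1$.

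\emph{Step 2 (shorten relators in $R_2$ using $w$).} Because $w$ depends only on $R_1$, it is independent of $R_2$. For any fixed such $w$, the expected number of positions where $w$ appears as a subword of a uniformly random length-$\ell$ freely reduced word is of order $\ell/(2m-1)^{2k}$, and the cross-terms in its variance (coming from overlapping candidate positions) are controlled by Lemma~\ref{decay}. Condition~(\ref{spade}) says precisely $\ell/(2m-1)^{2k}\gg 2k+2$, which is just what is needed to make the variance negligible compared to the mean squared; by Chebyshev, a.a.s.\ all but a vanishing fraction of the relators $r\in R_2$ contain $w$ as a subword. For each such $r=uwv$ choose (say) the first occurrence and set $\bar r = uv$, a word of length $\ell-2k$ still satisfying $\bar r =_G 1$. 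Let $\bar R$ be the resulting collection; then $|\bar R|\sim |R|/2$.

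\emph{Step 3 (classical triviality at the boosted density).} The effective density of $\bar R$ is $d'=\log|\bar R|/(\ell-2k)$, and a direct computation gives
\[
(d'-\tfrac12)(\ell-2k) = k - f\ell + O(1),
\]
which tends to $\infty$ by~(\ref{star}) (since $k-2\ell f\to\infty$ and $f\ell\ge 0$). For each pair of letters $(x,y)$ in the generating set, restrict to the relators in $\bar R$ beginning with $x$ or $y$ and box them by their length-$(\ell-2k-1)$ suffix; the $q=2$ case of Lemma~\ref{pigeons} now produces a.a.s.\ a pair $\bar r_1=xs$, $\bar r_2=ys$ with the same tail, so $xy^{-1}=_G 1$. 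Running this in parallel (via a union bound) over all $\binom{2m}{2}$ ordered pairs forces every generator to coincide with every other generator and with its own inverse in $G$, so $G$ is a quotient of $\Z/2\Z$ and hence isomorphic to $1$ or $\Z/2\Z$.

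\emph{Main obstacle.} The delicate point is justifying that $\bar R$ is ``close enough to uniform'' for the second pigeonhole. Conditioning on $w$ being excised at a specified location in $r$ introduces correlations between the letters immediately before and after the excised block, and these correlations must not spoil the suffix statistics used in Step~3. Lemma~\ref{decay} is the essential tool: the exponential decay of influence down a freely reduced word means that a few steps away from the excision the letter distribution is already within negligible total variation of uniform. Checking quantitatively that this perturbation does not consume the slender slack $k-f\ell=\omega(1)$ provided by~(\ref{star}) is the most technical portion of the argument.
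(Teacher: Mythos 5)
Your high-level strategy is the same as the paper's: find a short word $w=_G 1$ by a birthday collision among tails, use $w$ to shorten the remaining relators and so raise their effective density, then close with a second pigeonhole. The main differences (splitting $R$ into two halves for independence, excising a single occurrence of $w$ rather than many occurrences in disjoint blocks, and controlling the occurrence count by a second moment rather than by the paper's sub-block estimate) are reasonable variants and would, if carried out, give a cleaner argument than the paper's. In particular, one excision of length $2k$ already suffices: the boost $k - \ell f \to\infty$ follows from $(\star)$ since $f\geq0$, so you do not need the paper's $\geq b/4$ reductions per relator.

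However, there is a genuine gap in Step~3 that you flag but do not resolve, and the fix you hint at (decay of influence) is not the right tool. You box the shortened relators $\bar r$ that start with $x$ versus those that start with $y$ by their length-$(\ell-2k-1)$ suffix, and then invoke Lemma~\ref{pigeons}. But that lemma requires all the boxed random variables to share a single distribution $\mu$, and the suffix $\bar r[2:\,]$ does not have the same law under the constraint $\bar r[1]=x$ as under $\bar r[1]=y$: free reduction forces $r[2]\neq x^{-1}$ in the first case and $r[2]\neq y^{-1}$ in the second, and this difference propagates (however weakly) through the whole word and through the excision position, which is itself a function of the word. What rescues the argument in the paper is not decay of influence but the letter-$z$ device: one compares $R_{xz}$ and $R_{yz}$ for a fixed common second letter $z$ (with $z^{-1}\notin\{x,y\}$), sets aside the first two letters so the excision never touches them, and boxes by the suffix $\bar r[3:\,]$, whose conditional distribution given $\bar r[2]=z$ is literally identical in the two classes. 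You should incorporate exactly this conditioning; with it, your simpler "one excision'' scheme goes through, since $\bar r=uv$ as a raw concatenation always has length $\ell-2k$ (you do not even need it to be freely reduced for the pigeonhole and the cancellation $\bar r_1\bar r_2^{-1}\to xy^{-1}$ to work), and the event that the excision overlaps positions $1$ or $2$ has probability $O((2m-1)^{-2k})\to0$ and can be discarded.
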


\begin{cor}\label{triviality-cor}
The functions $k(\ell)=\frac12 \log(\ell)-\log\log(\ell)$ and
$f(\ell) = \frac{\log(\ell)}{4\ell} - \frac{\log\log(\ell)}{\ell}$
satisfy \eqref{star}, \eqref{spade}.
Thus a random group in 
$\H \left( \frac{\log(\ell)}{4\ell} - \frac{\log\log(\ell)}{\ell}\right)$ is
\aas $1$ or $\Z/2\Z$. 
\end{cor}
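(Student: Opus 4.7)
The plan is to apply Theorem~\ref{triviality} directly, which reduces the corollary to a routine verification that the two stated functions satisfy the hypotheses $(\star)$ and $(\spadesuit)$. Since the computations are purely asymptotic, no probabilistic machinery is needed here---one just substitutes and simplifies, remembering that throughout the paper $\log$ denotes $\log_{2m-1}$.

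For condition $(\star)$, I would compute $k - 2\ell f$ directly. Substituting gives
\[
2\ell f(\ell) = 2\ell \left( \frac{\log(\ell)}{4\ell} - \frac{\log\log(\ell)}{\ell} \right) = \tfrac{1}{2}\log(\ell) - 2\log\log(\ell),
\]
and so
\[
k(\ell) - 2\ell f(\ell) = \bigl(\tfrac{1}{2}\log(\ell) - \log\log(\ell)\bigr) - \bigl(\tfrac{1}{2}\log(\ell) - 2\log\log(\ell)\bigr) = \log\log(\ell),
\]
which tends to infinity.

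For condition $(\spadesuit)$, the key identity is $(2m-1)^{2k(\ell)} = \ell \cdot \log^{-2}(\ell)$, which follows because $2k = \log(\ell) - 2\log\log(\ell)$ and logarithms are base $2m-1$. Then the denominator in $(\spadesuit)$ behaves as
\[
(2k+2)(2m-1)^{2k} = \bigl(\log(\ell) - 2\log\log(\ell) + 2\bigr) \cdot \frac{\ell}{\log^2(\ell)} \sim \frac{\ell}{\log(\ell)},
\]
so
\[
\frac{\ell - 2}{(2k+2)(2m-1)^{2k}} \sim \log(\ell) \to \infty.
\]

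The only thing to check is that $k(\ell) \le \ell$ for all (sufficiently large) $\ell$, which is immediate since $\tfrac12\log(\ell) \ll \ell$. Once both $(\star)$ and $(\spadesuit)$ are verified, Theorem~\ref{triviality} yields that $G \in \H(f)$ with this choice of $f$ is \aas{}isomorphic to $1$ or $\Z/2\Z$. There is no real obstacle here; the content of the corollary is that the choice $k \approx \tfrac12 \log(\ell)$ is optimally tuned to let $f$ shrink as slowly as $\tfrac{\log \ell}{4\ell}$ (up to the $\log\log$ correction) while still keeping both hypotheses satisfied---and the above computation exhibits precisely this balance.
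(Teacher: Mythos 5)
Your proof is correct and takes essentially the same route as the paper: both verify $(\star)$ by the identical computation $k - 2\ell f = \log\log\ell$, and both verify $(\spadesuit)$ by showing $b \to \infty$; you do this by computing $b \sim \log\ell$ directly from the identity $(2m-1)^{2k} = \ell/\log^2\ell$, whereas the paper works on the log scale and shows $\log b \ge \log\log\ell - C$, which is the same estimate presented slightly differently.
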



\subsection*{Outline of the proof of Theorem~\ref{triviality}}
\begin{enumerate}[(Step 1)]
\item Using the pigeonhole principle (Lemma \ref{pigeons}), we find a freely reduced word $w$ of length $2k$ such that $w=_G 1$.
The existence of such a $w$ is guaranteed by \eqref{star},
and we will use it to reduce other relators.
\item In each relator $r$ we set aside the first two letters for later use, and then 
chunk the last $\ell-2$
letters into $b$ blocks of size $(2k+2)(2m-1)^{2k}$,
with the last block possibly smaller.
The \eqref{spade} condition says that $b\to\infty$.   
We show that 
 $w$ appears in one of these blocks
surrounded by non-canceling letters is
 with probability $>\frac 1 4$.

\item With these reductions, the probability that $r$
reduces to length at most $\ell'=\ell-\frac{bk}2$ is 
more than $1/3$.
\item Finally we show that for this choice of $\ell'$,
conditions
\eqref{star} and \eqref{spade} ensure that
$d\ell-\frac{\ell '}2\to\infty$.
From this we deduce that
 for any pair of generators $a_i$, $a_j$,
we can almost surely find two reduced relators
that start with $a_i$, $a_j$, respectively, and match
after that.
Therefore $a_i=_G a_j$ for all pairs of generators
(including $a_j=a_i^{-1}$),
which establishes the trivality result.
\end{enumerate}

\begin{proof}[Proof of Theorem~\ref{triviality}]

\begin{step}
Suppose $k-2\ell f \to \infty$.  Then \aas
there exists a reduced word $w$ of length $2k$ such that $w=_G 1$.
\end{step}

For each $r\in R$ the word $r[k+1:\ell]$ is one of the
$2m(2m-1)^{\ell-k-1}$ reduced words of length $\ell-k$.
We will find two relators $r_1,r_2$ such that their tails
match
(i.e., $r_1[k+1:\ell]=r_2[k+1:\ell]$) but they differ in
the previous letter ($r_1[k]\neq r_2[k]$).
We can conclude that $w=r_1 r_2^{-1}$ reduces to a word
of length $2k$.

For any word $w$ of length $p$, we  define
$R_w$ to be the subset of relators beginning with
that word:
$$R_w:=\{r\in R \mid r[1:p]=w  \}.$$
For letters $x,y,z$, $R_{xz}$ and $R_{yz}$ are disjoint
as long as $x$ and $y$ are distinct and neither one
is equal to $z^{-1}$.  Fix such letters $x,y,z$.
There are $2m(2m-1)$ possible two-letter reduced words and since we choose $R$ uniformly, the 
law of large numbers tells us that \aas
\[
|R_{xz}|>\frac 1{2m(2m-1)+1}\cdot |R|
=\frac {(2m-1)^{\ell(\frac 12 - f(\ell))} }{2m(2m-1)+1}.
\]
The same holds for $R_{yz}$.

We will check that
\[
\frac{(2m-1)^{\ell(\frac 12 - f(\ell))}}{2m(2m-1)+1}
\gg \sqrt{2m(2m-1)^{\ell-k-1}}.
\]
Using $2m-1\ge 3$, 
we have
$2m(2m-1)+1 \le 2(2m-1)^2$ and
$2m(2m-1)^{\ell-k-1} \le 4(2m-1)^{\ell-k}$,
which gives 
\[
\frac{(2m-1)^{\ell(\frac 12 - f(\ell))}}{2m(2m-1)+1}\cdot
\frac 1 {\sqrt{2m(2m-1)^{\ell-k-1}}}
\ge c (2m-1)^{{\ell(\frac 12 - f(\ell))} -\frac{\ell-k}{2}} = c (2m-1)^{\frac k2 - \ell f(\ell)}
\]
where $c=1/4(2m-1)^2>0$.
The right-hand side goes to infinity precisely
when \eqref{star} holds.

The purpose of introducing the letter $z$ is to ensure
that the tails of words in
$R_{xz}$ and $R_{yz}$ have
the same distribution.
Hence we can apply Lemma \ref{pigeons} (with $q=2$) to conclude that
\aas there exist $r_1\in R_{xz}$ and
$r_2\in R_{yz}$
such that $r_1[k+1:\ell]=r_2[k+1:\ell]$. Then setting
$w=(r_1[1:k])^{-1}\cdot r_2[1:k]$,
we have $w=_G 1$.

\begin{step}
Let $w$ be as above
and  $r$ be  freely reduced of length $\ell$.
Set $s=(2k+2)(2m-1)^{2k}$ and
$b=\lfloor\frac{\ell-2}{s}\rfloor$.
From the third letter on, divide $r$ into $b$ blocks
of length $s$ (with possibly one shorter block at the end).
For each such block $B$, let $\lambda(B)$ be the last
letter of $r$ preceding $B$.
Then
the conditional probability that $w$ appears in $B$
given any particular value of  $\lambda(B)$ is uniformly bounded
away from $0$ as follows:
\[
\forall g, \quad
\Pr\left(w \textrm{ appears in }B \mid \lambda(B)=g \right)
\geq 1-e^{-2/3}.
\]
\end{step}

Write $w=w_1\cdots w_{2k}$,
let $B$ be a block of size $(2k+2)(2m-1)^{2k}$, and divide it 
into $(2m-1)^{2k}$ subblocks $B_1,\dots, B_{(2m-1)^{2k}}$ of size 
$2k+2$. Let $E_i$ be the event that the word $w$ appears as $B_i[2:2k+1]$. See Figure \ref{subblock}.

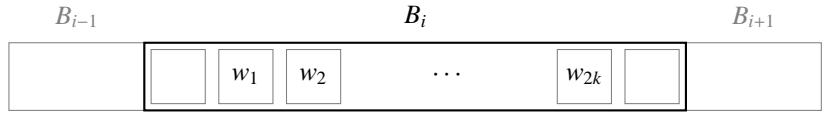
\begin{figure}[ht]
\begin{tikzpicture}[scale=.9]

\draw [gray] (8,0) rectangle node [above=15pt] {$B_{i+1}$} (10,1);
\draw [gray] (-2,0) rectangle node [above=15pt] {$B_{i-1}$} (0,1);
\draw [thick] (0,0) rectangle node [above=15pt] {$B_{i}$} (8,1);
\node at (1.5,.5) {$w_1$};
\node at (2.5,.5) {$w_2$};
\node at (4.5,.5) {$\cdots$};
\node at (6.5,.5) {$w_{2k}$};
\foreach \x in {0,1,2,6,7}
{\draw [gray] (\x+.1,0.1) rectangle (\x+.9,.9);}
\end{tikzpicture}

\caption{A part of block $B$. \label{subblock}}
\end{figure}

Let us compute the probability of $E_i$ given that none of
$E_1,\cdots E_{i-1}$ happens and given any last letter
$g_0$ before $B_i$. For $1\leq i\leq (2m-1)^{2k}$, we have
\begin{align*}
P_i&=\Pr(E_i \mid \neg E_1, \dots, \neg E_{i-1},
\lambda(B_i)=g_0)\\
&\stackrel{(1)}{=}
\Pr(B_i[2]=w_1 \mid
\neg E_1, \dots, \neg E_{i-1}, \lambda(B_i)=g_0)\cdot \left(\frac{1}{2m-1}\right)^{2k-1}\\
& \stackrel{(2)}{\geq}
\frac{2m-2}{(2m-1)^2}\cdot
\left(\frac{1}{2m-1}\right)^{2k-1}\ge
\frac 23 (2m-1)^{-2k}.
\end{align*}
Equality (1) follows from the fact that only $B_i[2]$ could be affected by previous letters in $r$.
Inequality (2) is an application of the decay of influence estimate (Corollary~\ref{decay}),
which guarantees that $\Pr(x_2=x \mid x_0=y)
\ge (2m-2)/(2m-1)^2$
for any $x,y$.  We deduce that
\[
\prod_{i=1}^{(2m-1)^{2k}} \Pr(\neg E_i\mid \neg E_0, \dots,
\neg E_{i-1}, \lambda(B)=g_0)
=\prod_{i=1}^{(2m-1)^{2k}}
\left(1-P_i\right)
\leq
\left(1-\frac 23 (2m-1)^{-2k}\right)^{\textstyle{(2m-1)^{2k}}}
\leq e^{-2/3},
\]
and so finally for any $g_0$,
\[
\Pr(w \textrm{ appears in }B \mid \lambda(B)=g_0)
\geq 1-e^{-\frac 23}>\frac 14.
\]

\begin{extrastep}\label{extrastep}
If there exists a subword $w'$ of $B$ of the form
\[
w'=sdwd^{-1}t
\]
for any word $d$ and letters $s,t$ with $s\neq t^{-1}$,
then we say that \emph{$B$ has a $w$-reduction}.  (In this case $w=_G 1 \implies w'=_G st$,
and $B$ remains freely reduced.)
For $k$ sufficiently large we bound \[
\Pr(\text{$B$ has a $w$-reduction}
\mid \lambda(B)=g_0)>\frac 14.
\]
\end{extrastep}

We want to bound from above the conditional probability that $w$ appears in $B$ in the wrong form for
a $w$-reduction.
This only happens if $B$ starts or ends with
$d w d^{-1}$
for some word $d=d_1\cdots d_n$.
Let us compute the probability that $B$ starts this way.
First we bound the probability that $w$
appears in the right place, then conditioning on that
we bound the other needed coincidences.
We have $\Pr(B[n+1]=w_1)\le \frac{1}{2m-1}$, and
$$\Pr(B[n+1:n+2k]=w \mid B[n+1]=w_1)
=\frac{1}{(2m-1)^{2k-1}}.$$
Next we consider whether
$B[n+1-j]=B[n+2k+j]^{-1}$ for each $j=1,\dots,n$.
For $j=1$, we have
$$
\Pr(B[n]=B[n+2k+1]^{-1}) = \frac{1}{2m-1}
\quad \text{or}\quad  \frac{2m-2}{(2m-1)^2},
$$
depending on whether $w_1=w_{2k}$ or not, but in either
case this is $\le 1/(2m-1)$.
For $j=2,\dots,n-1$, the conditional probability
is exactly $1/(2m-1)$.  For $j=n$, we have
the same two possibilities as before, depending on
whether $\lambda(B)=d_2$.
So all together we find
$$
\Pr(B \textrm{ starts with }dwd^{-1} \mid \lambda(B)=g_0)
\le \left(\frac{1}{2m-1}\right)^{2k+n}.
$$
The same inequality holds for finding $dwd^{-1}$ at the
end of $B$, so
$$
\Pr(\text{$w$ appears in $B$ with no $w$-reduction})
\le 2 \sum_{n=0}^\infty \left(\frac{1}{2m-1}\right)^{2k+n},
$$
and the right-hand side goes to $0$ as long as
$k\to\infty$.
So finally for sufficiently large $\ell$ (and therefore
$k$),
\[
\Pr( \textrm{$B$ has a $w$-reduction}
\mid \lambda(B)=g_0)>\frac 14.
\]

\begin{step}
For each relator $r$ denote by $\bar r$ the word obtained 
by performing the first available
$w$-reduction in each block. By comparing to an appropriate 
Bernoulli trial, for $k$ sufficiently large we  show that
\[
\Pr\left(\# \{\textrm{reductions of }w\textrm{ in }B\}
>\tfrac b 4   \mid r[1:2]=g_1g_2 \right)
>\frac 1 3,
\]
and conclude that
\[
\Pr\left(\length(\bar r)<\ell -\tfrac{kb}2
\mid r[1:2]=g_1g_2 \right)>\frac 1 3.
\]
\end{step}
Let $X_i$, for $i=1,\dots, b$, be i.i.d.\ random variables such that $X_i=1$ with probability $1/4$
and $X_i=0$ with probability $3/4$. Then by the central limit theorem,
\[
\lim_{b\to\infty}\Pr\left({\textstyle\sum X_i}>\tfrac b4\right)
=\frac 12.
\]
Let $\widetilde X_i$ be the indicator random variable for a $w$-reduction in the $i$th block $B^{(i)}$ of $r$.
The variables $\widetilde X_1,\widetilde X_2, \dots$ are not independent, but each $\widetilde X_i$ depends only on $\lambda(B^{(i)})$. By Step 2.5 we know that for any $g_0$,
\[
\Pr(\widetilde X_i=1 \mid \lambda(B^{(i)})=g_0)>\frac 14 = \Pr(X_i=1),
\]
so
\[
\Pr\left(\textstyle{\sum_{i=1}^b\widetilde X_i} >\tfrac b4 \mid  r[1:2]=g_1g_2\right)
\geq \Pr\left(\textstyle{\sum_{i=1}^bX_i}>\tfrac b4\right)
\to \frac 1 2.
\]
Thus for sufficiently large $\ell$,
\[
\Pr\left(\textstyle{\sum_{i=1}^b\widetilde X_i}>\tfrac b4 \mid  r[1:2]=g_1g_2\right)> \frac 13,
\]
and since each reduction shortens the word by at least $2k$ letters we have
\[
\Pr\left(\length(\bar r)<\ell - \tfrac{bk}2 \mid  r[1:2]=g_1g_2\right)> \frac 1 3.
\]

\begin{step}
Let $\bar R=\{\bar r\mid r\in R\}$ be the set of reduced words as above. 
For each pair of distinct elements $x,y$ chosen from the generators and their inverses,
\aas there exists a pair  $\bar r_1,\bar r_2\in\bar R$ such that $\bar r_1[1]=x$, $\bar r_2[1]=y$,
and
\[
\bar r_1[2:\length(\bar r_1) ]=\bar r_2[2:\length(\bar r_2)].
\]
Consequently,
$x=_G y$.  Triviality follows.
\end{step}

First, \eqref{spade} says that $b\to\infty$, so we have $b\geq 2$ for $\ell$ sufficiently large,
which gives
\[
\frac{bk}4-\ell f\geq \frac {k}2 -\ell f,
\]
and the right-hand side goes to infinity by \eqref{star}.

Next, let $x,y,z$ be chosen among the generators and their inverses such that $z^{-1}\neq x,y$
and $x\neq y$.  Recall that $R_w$ denotes words beginning with word $w$.
We examine relators $r\in R_{xz}\cup R_{yz}$ such
that $\length(\bar r)\leq\ell' = l-\frac{bk}{4}$.
Note that $|R_{xz}|$ is close to $\frac{|R|}{2m(2m-1)}$ a.a.s., 
and we expect $1/3$ of these to have enough reductions
so their length is no more than $\ell'$.
So we get
\[
|\{r\in R_{xz}\mid\length(\bar r)\leq\ell'\}|
>\frac {(2m-1)^{\ell(\frac 12 - f)}}{3(2m)(2m-1)+1},
\]
and the same holds for $R_{yz}$.
To apply Lemma~\ref{pigeons} to get matching tails, we must compare
the number of shortened words
to the square root of the number of possible tails.
(The two colors are initial 2-letter words and the 
boxes are final $(\ell'-2)$-letter words.)
In order to see that
\[
(2m-1)^{\ell(\frac 12 - f)}\gg \sqrt{(2m-1)^{\ell '-2}},
\]
note that
\[
\frac{(2m-1)^{\ell(\frac 12 - f)}}{\sqrt{(2m-1)^{\ell '-2}}}
\geq (2m-1)^{\tfrac{bk}4 - \ell f} \to\infty .
\]
We may conclude that  \aas
there exists a pair of words $r_1\in R_{xz}$ and $r_2\in R_{yz}$ such that
\[
\bar r_1[3:\length(\bar r_1)]=\bar r_2[3:\length(\bar r_2)],
\]
and since $\bar r_1=_G 1 =_G \bar r_2$, we get
$xz=_G yz$,
so finally $x=_G y$.
This means that \aas
all generators and their inverses are equal in $G$.
\end{proof}

\begin{proof}[Proof of Corollary~\ref{triviality-cor}]
For  \eqref{star} we compute
$k - 2 \ell f=\log\log \ell$,
which goes to infinity. 
Condition \eqref{spade} is equivalent to $b\to\infty$, and we calculate 
\begin{align*}
 \log b=\log \left( \frac{\ell - 2}{(2k+2)(2m-1)^{2k}} \right)
&= \log (\ell - 2)   - \log (2k + 2) - 2k \\
& \geq \log \ell - \log\log\ell -\log\ell+2\log\log\ell - C\\
& = \log\log\ell -C
\end{align*}
for a suitable constant $C$.
\end{proof}

\section{The hyperbolic range}

To prove hyperbolicity, we establish an isoperimetric inequality on reduced van Kampen diagrams
(RVKDs) for a random group, as in Ollivier~\cite[Chapter 5]{Ollivier05}.
The main difference to our argument is that, rather than aiming to show a linear isoperimetric
inequality directly, we show that the random group satisfies a quadratic isoperimetric
inequality with a small constant.
This in turn implies that the group is hyperbolic by a well-known result of Gromov
(see Papasoglu~\cite{Papasoglu} and Bowditch~\cite{Bowditch}).

Following Ollivier,
we write $D$ for a (reduced) van Kampen diagram; $|D|$ for its number of faces, and $|\partial D|$ for the length of its boundary.
(Note $|\partial D|\ge \#$ boundary edges because of possible ``filaments.")
A path of contiguous edges so that
all interior vertices have valence two is called a
{\em contour}.

The key fact which allows us to check the isoperimetric inequality only on diagrams
of certain sizes is the following theorem of Ollivier, which is a variation on 
Papasoglu's result in~\cite{Papasoglu}.

\begin{lem}[Local-global principle
\protect{\cite[Prop 9]{Ollivier07}}]\label{isoperimetric-inequality}
For fixed $\ell$ and $K\geq 10^{10}$, if
\[\underbrace{\tfrac{K^2}{4}\leq|D|\leq 480 K^2}_{\PP}
\Rightarrow\underbrace{|\partial D|^2\geq 2\cdot 10^4
\ell^2|D|}_{\QQ},\]
then
\[
|D|\geq K^2\Rightarrow|\partial D|
\geq \frac{\ell}{10^4 K}|D|.
\]
\end{lem}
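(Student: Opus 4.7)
The plan is an induction on $|D|$, following the spirit of Papasoglu's local-to-global bisection principle. Assume for contradiction that the linear conclusion fails for some reduced van Kampen diagram with $|D|\ge K^2$, and fix such a counterexample $D$ of minimum area.

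\textbf{Base case.} If $K^2\le |D|\le 480K^2$, then \PP\ holds, so \QQ\ gives $|\partial D|\ge 100\sqrt{2}\,\ell\sqrt{|D|}$. Since $\sqrt{|D|}\le\sqrt{480}\,K$ throughout this range,
\[
|\partial D|\ \ge\ 100\sqrt{2}\,\ell\sqrt{|D|}\ \ge\ \frac{100\sqrt{2}}{\sqrt{480}}\cdot\frac{\ell\,|D|}{K}\ \gg\ \frac{\ell}{10^4K}|D|,
\]
a vast overshoot of the desired bound, so no minimal counterexample can lie in this range.

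\textbf{Inductive step.} Suppose $|D|>480K^2$. The core task is to produce a short embedded edge path $\gamma\subset D$ with endpoints on $\partial D$, splitting $D$ into two reduced subdiagrams $D_1,D_2$ that each satisfy $|D_i|\ge K^2/4$ and together satisfy $2|\gamma|\le \tfrac{1}{2}\cdot\tfrac{\ell}{10^4K}|D|$. Given such a cut, each $D_i$ has area strictly less than $|D|$; if $|D_i|\ge K^2$ then the inductive hypothesis supplies a linear bound, while if $K^2/4\le |D_i|<K^2$ the hypothesis \QQ\ supplies one directly (via the same base-case computation). Summing these and using $|\partial D_1|+|\partial D_2|=|\partial D|+2|\gamma|$ gives
\[
|\partial D|+2|\gamma|\ \ge\ \tfrac{\ell}{10^4K}(|D_1|+|D_2|)\ =\ \tfrac{\ell}{10^4K}|D|,
\]
and absorbing $2|\gamma|$ into the slack between the quadratic hypothesis and linear target (as measured by the $\gg$ above) yields $|\partial D|\ge\tfrac{\ell}{10^4K}|D|$, contradicting the choice of $D$.

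\textbf{Main obstacle.} The heart of the argument is the existence of the cut $\gamma$. I would construct it by a combinatorial bisection on $D$: pick a basepoint, form concentric balls in the dual graph, and across the $\Theta(\sqrt{|D|})$ annuli between radius $0$ and the diameter apply a discrete pigeonhole to locate a radius at which the bounding cycle has length $O(\ell\sqrt{|D|})$, cutting off a subdiagram of area in the window $[K^2/4,\,|D|-K^2/4]$. The numerical constants in the statement---$K^2/4$ and $480K^2$ for the window, $2\cdot10^4$ for the quadratic constant, and $10^4 K$ for the linear target---are calibrated precisely so that both $|D_i|\ge K^2/4$ \emph{and} the length bound on $\gamma$ are achievable simultaneously, which is what makes the absorption step go through.
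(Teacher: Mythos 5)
The paper does not prove this lemma at all---it is quoted verbatim from Ollivier (Proposition~9 of \cite{Ollivier07}), which in turn is an adaptation of Papasoglu's theorem \cite{Papasoglu}. So you are attempting something the authors deliberately chose to cite rather than reprove, which is worth keeping in mind; but the attempt itself has two genuine gaps.

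\textbf{The absorption step fails when both pieces are large.} You write $|\partial D| + 2|\gamma| \ge \frac{\ell}{10^4K}|D|$ and then say the excess $2|\gamma|$ can be ``absorbed into the slack between the quadratic hypothesis and linear target.'' But that slack only exists for pieces lying in the window $[K^2/4, K^2)$, where you get the $\gg$ overshoot from \QQ. When $|D|$ is huge, a balanced cut will typically produce two pieces each with $|D_i|\ge K^2$; for those the inductive hypothesis gives exactly $|\partial D_i|\ge\frac{\ell}{10^4K}|D_i|$ with no slack at all, so the inequality you derive reads $|\partial D|\ge\frac{\ell}{10^4K}|D|-2|\gamma|$, which does not contradict $|\partial D|<\frac{\ell}{10^4K}|D|$. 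A single bisection with induction cannot close this---one needs either to decompose all the way down into pieces inside the good window (accounting for the cumulative cost of all the cuts), or to work with a strictly stronger inductive invariant that carries slack forward. The constants $K^2/4$, $480K^2$, $2\cdot10^4$, $10^4K$ in Ollivier's statement are calibrated for exactly this kind of bookkeeping, which your sketch does not perform.

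\textbf{The cut construction is unestablished.} The ``concentric balls plus discrete pigeonhole'' paragraph is the entire technical content of Papasoglu's and Bowditch's arguments, and it is not routine: one must simultaneously guarantee that some level set is short \emph{and} that the area on each side lies in the prescribed range, for a general reduced van Kampen diagram in which face perimeters are $\ell$ and which need not be round in any useful sense. Asserting that a cut with $2|\gamma|\le\frac12\cdot\frac{\ell}{10^4K}|D|$ and $|D_i|\ge K^2/4$ exists is precisely what the cited results prove and is not something one gets for free from a pigeonhole; as written this is a statement of the goal rather than an argument for it.

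The base-case computation is correct. But the inductive step as stated does not close, and the cut lemma it relies on is the actual theorem being asked for; you should either supply those details (essentially reproving Papasoglu--Bowditch--Ollivier) or, as the paper does, cite the result.
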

That is, if RVKDs in a certain size range satisfy
a good enough {\em quadratic} isoperimetric inequality,
then all RVKDs satisfy a {\em linear} isoperimetric
inequality.  Later, we will let $K=K(\ell)$ to vary the window
of diagrams considered.

We will use Ollivier's definitions concerning {\em abstract
diagrams}, which are a device for precise bookkeeping
in van Kampen diagrams to control dependencies in
probabilities.  Roughly speaking, an abstract diagram is a van Kampen 
diagram where we forget the labelling of edges by generators and the labelling of 
faces by relators.  We do keep track 
of the orientation and starting point of the boundary of each face, 
and we also label faces so we know which faces bear the same relator.
(Since our relators are reduced but need not be cyclically reduced, each
face in an abstract diagram is allowed to have a single ``inward spur'', 
see~\cite[Page 83, footnote 4]{Ollivier05}.)

For our group to be \aas (infinite torsion-free)
hyperbolic, it suffices to have one RVKD for each
trivial word that satisfies the linear isoperimetric
inequality, so this statement for all RVKDs will be
more than enough.
To show
that \aas all diagrams satisfy the hypothesis,
we show that the probability of a diagram existing
that has \PP but not \QQ tends to $0$.  To calculate
this, we must first get a bound on how many abstract
diagrams have \PP, and the probability that such an
abstract diagram is fufillable from our relator set.

\subsection{Probability of fulfillability}
Still following Ollivier, we estimate the probability that some relators exist to 
fulfill $D$.
\begin{lem}[\protect{\cite[Lem 59]{Ollivier05}}]\label{fulfillability}
Let $R$ be a random set of relators with $|R|=\num(\ell)$
at length $\ell$.
Let $D$ be a reduced abstract diagram. Then we have
\[
\Pr(D \textrm{ is fulfillable})
\leq (2m-1)^{\textstyle{\frac 1 2
\left(\frac{|\partial D|}{|D|}- \ell +2 \log\num\right)} }
=(2m-1)^{\textstyle{\frac 1 2 \left( \frac{|\partial D| }{|D|} - \ell (1-2\D) \right)}}
\]
\end{lem}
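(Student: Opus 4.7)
The natural route is a first-moment (union) bound: list all possible assignments of relators from $R$ to the faces of $D$, and for each such assignment bound the probability that the resulting labelling is consistent along every interior edge.

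For the counting step, each of the $|D|$ faces receives one of at most $\num$ relators (with orientation and basepoint already recorded in the abstract diagram), giving at most $\num^{|D|}$ candidate labellings. A standard double count on the sides of faces---each face contributing $\ell$ sides, each interior edge shared by two faces, each boundary edge counted with multiplicity (accounting for filaments)---yields $E_{\text{int}} = \tfrac{1}{2}(\ell|D| - |\partial D|)$ interior edges, at each of which the two adjoining face-labels must agree on the relevant letter. Hence
\[
\Pr(D \text{ fulfillable}) \ \leq\ \num^{|D|} \cdot \max_{\sigma} \Pr(\sigma \text{ consistent}).
\]
To bound the consistency probability I would reveal the random relators one face at a time, in the order induced by a spanning tree of the dual graph of $D$. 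When a new face is attached, some letter positions of its relator are prescribed by already-placed neighbours; since each relator is uniform on freely reduced words, the conditional probability of matching any prescribed letter at any specific position, given the previously revealed letters of the same word, is at most $1/(2m-1)$ by Corollary~\ref{decay}. Summed over all faces, these constraints recover exactly the $E_{\text{int}}$ interior edges, so $\Pr(\sigma \text{ consistent}) \leq (2m-1)^{-E_{\text{int}}}$. Combining with the union bound and substituting $\log\num = \D\ell$ then yields an exponent of the form $\tfrac{1}{2}\bigl(|\partial D|-|D|\ell(1-2\D)\bigr)$, equivalent to the stated bound read per face.

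The main obstacle is justifying the product of $1/(2m-1)$ factors inside a single face: the multiple constraints imposed there on one freely reduced random word are not literally independent. I would handle this by revealing the letters of the new relator sequentially along its boundary and applying the one-letter decay-of-influence bound each time a prescribed position is encountered; this converts the naive independence heuristic into a rigorous conditional estimate. A further minor subtlety is the role of filaments and inward spurs (for which $|\partial D|$ is counted with multiplicity), but as observed by Ollivier these can only reduce the number of active constraints, so the inequality still goes the right way.
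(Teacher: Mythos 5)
The paper does not actually prove this lemma---it is cited verbatim from Ollivier~\cite[Lem 59]{Ollivier05}---so you are reconstructing the external proof rather than reproducing an argument from the text. Your outline (union bound over relator assignments, an Euler-type edge count, and sequential revelation of letters to control dependence) is precisely Ollivier's strategy, and the two obstacles you flag---non-independence of constraints within a single relator, and the effect of filaments/spurs---are the right ones. Two remarks. First, on the edge count: as you note, the identity $E_{\text{int}}=\tfrac12(\ell|D|-|\partial D|)$ is really an inequality $E_{\text{int}}\ge\tfrac12(\ell|D|-|\partial D|)$ once filaments are present (they inflate $|\partial D|$ without contributing faces), and the inequality does point the right way. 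Second, and more importantly, be careful about the claim that your exponent is ``equivalent to the stated bound read per face.'' Your calculation yields $(2m-1)^{\frac12\left(|\partial D|-|D|\ell(1-2\D)\right)}$, which is Ollivier's original bound and has an extra factor of $|D|$ in the exponent compared to the lemma as stated here. These are not equivalent; rather, your bound \emph{implies} the paper's: when the quantity $\tfrac{|\partial D|}{|D|}-\ell(1-2\D)$ is negative, multiplying by $|D|\ge 1$ only makes the exponent more negative, and when it is nonnegative the paper's bound exceeds $1$ and is vacuous. It would be worth stating this implication explicitly rather than gesturing at a ``per-face'' reading. Finally, in the sequential-revelation step you should reveal distinct relator \emph{labels} rather than faces (an abstract diagram records which faces carry the same relator), since otherwise constraints between two faces bearing the same relator become constraints within a single word and one cannot simply multiply by $\num$ for each face; the reducedness hypothesis is what guarantees that each interior-edge constraint is genuinely non-automatic. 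With these adjustments your argument is sound and matches the cited source.
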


In our case, our choice of $\num(\ell)$ gives $\D=\frac 12 - f(\ell)$.
If a diagram satisfies \PP and not \QQ, we get
$$\frac{| \partial D|}{|D|} < \frac{\sqrt{2} \sdot 10^2  \ell \sqrt{|D|}}{K^2/4}
< \frac{5 \sdot 10^3  \ell K}{K^2/4}=\frac{2\sdot 10^4\ell}{K}.$$
All together, we get
\[
\Pr(D \textrm{ is fulfillable})
\leq (2m-1)^{\textstyle{10^4 \frac{\ell}{K} - \ell\sdot f(\ell)} }.
\]

\subsection{Counting abstract diagrams}

There is a forgetful map from abstract diagrams $\Gamma$ to embedded planar graphs $\Gamma'$ that strips away the
data (i.e., subdivision of contours into edges, face labelings, and start points and orientations
for reading around each face).  Figure~\ref{abstract-diagram} shows an example.  To see that the planar
embedding matters, consider  the two different ways
of embedding a figure-eight---clearly
different as van Kampen diagrams.
(\raisebox{-.05in}{\tikz \draw (0,0) circle (.2) (.4,0) circle (.2);} versus \raisebox{-.05in}{\tikz  \draw (0,0) arc (-180:180:.22) (0,0) arc (-180:180:.17);} )
Adding data to a  graph to recover an abstract diagram will be called {\em filling in}.

In order to find an upper bound on the number of van Kampen diagrams up to a certain size,
we will count  possible abstract diagrams 
by enumerating planar
graphs and ways of filling in.

\begin{figure}[ht]
\begin{tikzpicture}[scale=0.50]
\node at (4,3) {$\Gamma$};
\node at (17,3) {$\Gamma '$};

\node (v1) at (0,0) {};
\node (v2) at (-1,1) {};
\node (v3) at (-1,2.5) {};
\node (v4) at (0,3.5) {};
\node (v5) at (1.5,3.5) {};
\node (v6) at (2.5,2.5) {};
\node (v7) at (2.5,1) {};
\node (v8) at (1.5,0) {};
\node (v10) at (-1,-1) {};
\node (v11) at (-1,-2.5) {};
\node (v12) at (0,-3.5) {};
\node (v13) at (1.5,-3.5) {};
\node (v14) at (2.5,-2.5) {};
\node (v15) at (2.5,-1) {};
\node (v17) at (4,-3) {};
\node (v16) at (4,1.5) {};
\draw (0,0) node (v9) {};

\draw  plot[smooth, tension=.7] coordinates {(v1) (v2) (v3) (v4) (v5) (v6) (v7) (v8) (0,0)};
\draw  plot[smooth, tension=.7] coordinates {(v9) (v10) (v11) (v12) (v13) (v14) (v15) (v8)};
\draw  plot[smooth, tension=.7] coordinates {(v7) (v16) (5.5,0.5) (5.5,-1.5) (v17) (v14)};

\draw (-1.1,2.6) -- (-0.9,2.4);
\draw (-0.1,3.6) -- (0,3.4);
\draw (1.5,3.6) -- (1.4,3.4);
\draw (-1.1,0.9) -- (-0.9,1);
\draw (2.4,2.4) -- (2.6,2.5);

\draw (-1.1,-1) -- (-0.9,-1.1);
\draw (-1.1,-2.6) -- (-0.9,-2.5);
\draw (-0.1,-3.6) -- (0,-3.4);
\draw (1.6,-3.6) -- (1.5,-3.4);
\draw (2.4,-1.1) -- (2.6,-1);

\draw (4,-2.9) -- (4.1,-3.1);
\draw (5.4,-1.5) -- (5.6,-1.6);
\draw (5.4,0.5) -- (5.6,0.6);
\draw (4,1.4) -- (4.1,1.6);

\node at (0.6,1.6) {1};
\node at (0.6,-2) {2};
\node at (4,-0.6) {1};

\draw [decorate,decoration=zigzag] (7,0) -- (9.8,0);
\draw [->] (9.8,0) -- (10,0);

\draw [->] (-1.1,0.9) -- (-1.2,1.2);
\draw [->] (-1.1,-2.6) -- (-0.95,-2.9);
\draw [->] (4.1,-3.1) -- (4.4,-2.95);

\node (r1) at (13,0) {};
\node (r2) at (12,1) {};
\node (r3) at (12,2.5) {};
\node (r4) at (13,3.5) {};
\node (r5) at (14.5,3.5) {};
\node (r6) at (15.5,2.5) {};
\node (r7) at (15.5,1) {};
\node (r8) at (14.5,0) {};
\node (r10) at (12,-1) {};
\node (r11) at (12,-2.5) {};
\node (r12) at (13,-3.5) {};
\node (r13) at (14.5,-3.5) {};
\node (r14) at (15.5,-2.5) {};
\node (r15) at (15.5,-1) {};
\node (r17) at (17,-3) {};
\node (r16) at (17,1.5) {};
\draw (13,0) node (r9) {};

\draw  plot[smooth, tension=.7] coordinates {(r1) (r2) (r3) (r4) (r5) (r6) (r7) (r8) (13,0)};
\draw  plot[smooth, tension=.7] coordinates {(r9) (r10) (r11) (r12) (r13) (r14) (r15) (r8)};
\draw  plot[smooth, tension=.7] coordinates {(r7) (r16) (18.5,0.5) (18.5,-1.5) (r17) (r14)};
\end{tikzpicture}
\caption{Abstract diagram and corresponding embedded planar graph.
\label{abstract-diagram}}
\end{figure}
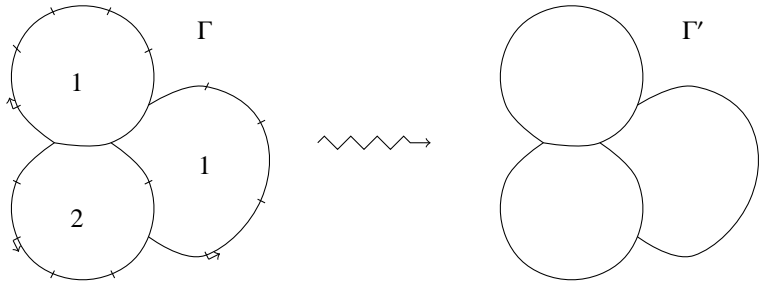

\begin{prop}[Diagram count]
Let $N_{F}(\ell)$ be the number of abstract diagrams with at most $F$ faces, each of boundary length $\ell$.
Then $\log N_F(\ell)$ is  asymptotically bounded above by $6 F \log\ell +  2F \log F$.
\end{prop}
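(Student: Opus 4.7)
The plan is to factor the count through the forgetful map $\Gamma \mapsto \Gamma'$ to embedded planar graphs, as described just before the proposition. That is, I would first bound the number of possible $\Gamma'$, and then for each fixed $\Gamma'$ bound the number of ways to ``fill in'' the additional data of an abstract diagram (contour subdivisions, start points, orientations, spur locations, and face equivalence classes).

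For the first step, every vertex of $\Gamma'$ has valence $\geq 3$ (valence-two vertices having been absorbed into contours), so Euler's formula together with $2E' = \sum_v \deg(v) \geq 3V'$ forces $E' \leq 3F + O(1)$ when $\Gamma'$ has at most $F + 1$ faces (counting the outer face). Tutte's asymptotic enumeration of rooted planar maps (with $\sim \frac{2}{\sqrt{\pi}} n^{-5/2} 12^n$ maps on $n$ edges) then bounds the number of such embedded planar graphs by $\exp(O(F))$, so this step contributes only a lower-order additive term to $\log N_F(\ell)$.

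For the second step, given $\Gamma'$, an abstract diagram with $\leq F$ faces of boundary length $\ell$ is determined by: \textbf{(i)} a subdivision-length in $\{1,\ldots,\ell\}$ for each of the $\leq 3F + O(1)$ edges of $\Gamma'$, contributing at most $\ell^{3F + O(1)}$ choices; \textbf{(ii)} a start point ($\leq \ell$ choices), an orientation ($2$ choices), and the location (or absence) of the allowed inward spur ($\leq \ell + 1$ choices) for each of the $\leq F$ faces, contributing at most $(c\ell)^F$ choices; and \textbf{(iii)} an equivalence relation on faces grouping those that bear the same relator, bounded by the number of partitions of a set of size $\leq F$, hence at most $F^F$. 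Since all of the above contribute as upper bounds, we need not worry that the boundary-length constraint $\ell$ couples the choices in (i). Taking logarithms, we obtain
$$\log N_F(\ell) \;\leq\; 3F \log \ell + F \log \ell + F \log F + O(F),$$
which is asymptotically dominated by $6F \log \ell + 2F \log F$, with considerable slack.

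The main obstacle is fitting filaments and inward spurs cleanly into Tutte's framework for rooted planar maps; however, these extra features multiply the count only by a polynomial in $F$ and $\ell$, so any crude bound is absorbed into the $O(F)$ or $F \log \ell$ terms. The remaining work is routine bookkeeping, and the constants in the final statement are loose enough that no sharp optimization is needed.
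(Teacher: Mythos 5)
Your approach is essentially the paper's: both factor through the forgetful map to embedded planar graphs, bound the number of edges via Euler's formula and the valence-$\geq 3$ condition, invoke Tutte's enumeration of planar maps (you cite the asymptotic form, the paper the exact formula $\tfrac{2(2n)!\,3^n}{n!\,(n+2)!}$, which agree), and then multiply by per-edge and per-face choices --- contour subdivisions ($\ell^E$), oriented start points ($(2\ell)^F$), and face labelings ($F^F$) --- before taking logarithms. One small bookkeeping caveat: if inward spurs are \emph{not} carried as edges of $\Gamma'$, as your $E'\leq 3F+O(1)$ suggests, then recovering $\Gamma$ requires specifying each spur's length as well as its attachment point (an extra $\ell^F$ factor that you omit); the paper instead counts each spur as a contour of $\Gamma'$, giving $E\leq 5F$, and in either accounting the stated bound $6F\log\ell + 2F\log F$ holds with room to spare.
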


\begin{proof}
Consider abstract diagrams with no more than $F$ faces.
Since there are $\ell$ edges on the boundary of each face, two orientations,
and at most $F$ faces, there are no more than  $(2 \ell)^F$ choices of oriented start points.
Faces can have at most $F$ distinct labels, so there are at most $F^F$ possible labelings.

In order to estimate the number of ways we can subdivide the contours into edges,
we first count edges of $\Gamma '$.  If $\Gamma'$ has no inward spurs, then every vertex
has valence at least three.  Since the Euler characteristic is $V - E + F=1$,
we have $2E \geq 3V$, which simplifies to
$E\le 3F - 3 \le 3F$.  Each face of $\Gamma'$ can have at most one inward spur, which increases the number of edges
by $\leq 2$ for each face, so the total number of edges in $\Gamma'$ satisfies $E\le 5F$.

The number of ways to put $\ell$ edges around each face can be overcounted
by the number of ways to subdivide each contour into exactly $\ell$ edges,
which is $\ell^E$ and so is bounded above by $\ell^{5F}$.

Tutte shows in \cite[p.~254]{Tutte63} that
the number of embedded planar graphs with exactly $n$ edges is $\frac{2(2n)!3^n}{n!(n+2)!} $.
Using $E \leq 5F$, and $(n/e)^n \leq n! \leq n^n$ (with lower bound from Stirling's formula), we get
\begin{align*} \textrm{\# ($\Gamma'$ with $\le 5F$ edges)}
& \le\sum_{n=1}^{5F} \frac{2(2n)!\, 3^n}{n!\, (n+2)!} 
\leq 5F \frac{2(10F)!\, 3^{5F}}{(5F)!\,(5F+2)!} \\
& \leq \frac{(10F)! 3^{5F}}{(5F)!(5F)!} \leq
 \frac{(10F)^{10F} 3^{5F}}{(5F/e)^{10F}} = (2e)^{10F} 3^{5F} \leq 3^{25F}.
\end{align*}
Combining  the above information, we get
$$N_F(\ell)
\leq (2 \ell)^F F^F \ell^{5F} 3^{25F},$$
and so
$$\log N_F \leq F\log (2\ell)+ F\log F + 5F\log \ell + 25F\log 3.$$

Gathering terms of  highest order, we have an upper bound by $6 F \log\ell+2F \log F$, as claimed.
\end{proof}

\begin{cor} \label{diagram-bound}
Let $N^I(\ell)$ be the number of reduced van Kampen
diagrams with property \PP at relator length $\ell$.
Then $\log N^I(\ell)$ is  asymptotically bounded above by $3000 K^2 \log(K\ell)$.
\end{cor}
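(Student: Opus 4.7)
The plan is to reduce Corollary~\ref{diagram-bound} almost immediately to the preceding Proposition by observing that property \PP gives an explicit upper bound on the number of faces. Since any diagram counted by $N^I(\ell)$ satisfies $|D|\le 480\,K^2$, we have
\[
N^I(\ell)\ \le\ N_{F}(\ell), \qquad\text{with }F := 480\,K^2,
\]
so by the Proposition,
\[
\log N^I(\ell)\ \le\ 6F\log\ell + 2F\log F
\ =\ 2880\,K^2\log\ell + 960\,K^2\log(480\,K^2).
\]

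The second step is just to expand and collect:
\[
960\,K^2\log(480\,K^2)\ =\ 1920\,K^2\log K + 960\,K^2\log 480,
\]
so
\[
\log N^I(\ell)\ \le\ 2880\,K^2\log\ell + 1920\,K^2\log K + O(K^2).
\]
Comparing with $3000\,K^2\log(K\ell) = 3000\,K^2\log K + 3000\,K^2\log\ell$, the coefficients $2880 < 3000$ and $1920 < 3000$ give slack in both the $\log\ell$ and the $\log K$ terms; this slack absorbs the constant $O(K^2)$ contribution from $\log 480$ as $K,\ell\to\infty$ (recall $K\ge 10^{10}$), yielding the desired asymptotic bound $3000\,K^2\log(K\ell)$.

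I do not anticipate a real obstacle: the only thing to verify is that "asymptotically bounded above" gives us the slack to absorb the additive $O(K^2)$ term, which is immediate from the strict inequalities between the coefficients. The one judgment call is to choose $F = 480\,K^2$ rather than summing over individual values of $|D|$; this is justified because $N_F(\ell)$ in the Proposition already counts diagrams with \emph{at most} $F$ faces, so no separate summation is needed.
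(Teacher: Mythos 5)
Your proof is correct and is essentially the same as the paper's, which just sets $F=480K^2$ in the Proposition and observes that the resulting bound is absorbed by $3000K^2\log(K\ell)$; you simply spell out the arithmetic that the paper leaves implicit. The observation that the slack in the coefficients $2880<3000$ and $1920<3000$ absorbs the additive $O(K^2)$ term, using $K\ge 10^{10}$, is exactly the justification the paper relies on.
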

\begin{proof}
Considering all diagrams with  $|D|\le 480K^2$ will be an overcount, so we use $F=480K^2$ in
the above estimate, i.e., $N^I(\ell)\le N_{480K^2}(\ell)$.  
\end{proof}

\subsection{Hyperbolicity threshold}

\begin{thm}[Sufficient condition for hyperbolicity]\label{hyperbolicity} Given any $f(\ell)=o(1)$,
suppose there exists a function
$K: \N \to \N$ such that
\begin{equation}\tag{$*$}\label{asterisk}
3000 K^2 \log(K\ell) +  10^4 \tfrac{\ell}{K} - \ell\sdot f(\ell)   \to -\infty.
\end{equation}
Then $G\in\H(f)$ is \aas (infinite torsion-free) hyperbolic.
\end{thm}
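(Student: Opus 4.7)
The plan is to apply the local-global principle (Lemma~\ref{isoperimetric-inequality}) after a first-moment argument that rules out the existence of any ``bad'' RVKD. Concretely, I want to show that under hypothesis~\eqref{asterisk}, \aas no RVKD $D$ satisfies \PP\ while violating \QQ. The lemma then delivers a linear isoperimetric inequality $|\partial D|\geq (\ell/10^4K)|D|$ for every RVKD with $|D|\geq K^2$, which by the cited theorems of Gromov, Papasoglu, and Bowditch implies Gromov hyperbolicity of $G$, with the additional infinite, torsion-free, one-ended conclusions following exactly as in the $d<1/2$ case treated in \cite{Ollivier05}.

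The core of the proof is a single union bound. By Corollary~\ref{diagram-bound}, the number of abstract diagrams satisfying~\PP\ is at most $(2m-1)^{3000K^2\log(K\ell)}$ asymptotically. For any such abstract diagram that moreover violates~\QQ, the bound $|\partial D|/|D|<2\cdot 10^4\ell/K$ computed just after Lemma~\ref{fulfillability}, combined with that lemma, yields
\[
\Pr(D\text{ is fulfillable})\leq (2m-1)^{10^4\ell/K-\ell f(\ell)}.
\]
Linearity of expectation then bounds the expected number of fulfilled abstract diagrams with~\PP\ but not~\QQ\ by $(2m-1)^{3000K^2\log(K\ell)+10^4\ell/K-\ell f(\ell)}$, which tends to $0$ by~\eqref{asterisk}. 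Markov's inequality delivers the desired \aas nonexistence.

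We may assume $K(\ell)\geq 10^{10}$ for all large $\ell$ (replacing $K$ by $\max(K,10^{10})$ if necessary, which preserves~\eqref{asterisk}). Thus \aas Lemma~\ref{isoperimetric-inequality} applies, and we obtain the linear isoperimetric inequality---and hence hyperbolicity of $G$---for a.e.\ random choice of relators. The substantive content is already contained in the two preceding lemmas and the diagram-counting corollary; the only step to watch is that the exponent produced by the union bound is precisely the quantity appearing in~\eqref{asterisk}, so the ``main obstacle'' at this stage is careful bookkeeping rather than any new mathematical input.
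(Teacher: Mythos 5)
Your proposal is correct and follows essentially the same argument as the paper: a union bound (first-moment estimate) over abstract diagrams with \PP\ but not \QQ, combining Corollary~\ref{diagram-bound} with Lemma~\ref{fulfillability}, and then invoking Lemma~\ref{isoperimetric-inequality}. Your extra remark about ensuring $K\geq 10^{10}$ is a worthwhile detail the paper glosses over, though the claim that replacing $K$ by $\max(K,10^{10})$ ``preserves \eqref{asterisk}'' deserves a word of justification---in fact \eqref{asterisk} already forces $K(\ell)\geq 10^{10}$ for all large $\ell$, since otherwise $10^4\ell/K>10^{-6}\ell$ dominates $\ell f(\ell)=o(\ell)$ and the expression could not tend to $-\infty$.
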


\begin{remark}
In view of Corollary~\ref{diagram-bound},
one intuitive way of choosing a $K,f$ pair
is to take $K^2 \log \ell$ and $\ell f(\ell)$ to be of the
same order.
It turns out that 
we can do slightly better than that by instead choosing
to equalize the orders of $\frac{\ell}{K}$ and
$\ell f(\ell)$, which gives the  pair below.
\end{remark}

\begin{cor}
For any constants $c,c'$ with $0<4000c'^2+\frac{10^4}{c'}<c$,
the functions $f(\ell)=c  \frac{\log^{1/3} (\ell) }{\ell ^ {1/3}}$ and $K(\ell)=c' \frac{\ell ^ {1/3}}{\log^{1/3} (\ell) }$
satisfy \eqref{asterisk}.

In particular, for $c>10^5$, a random group in
$\H\left(c  \frac{\log^{1/3} (\ell) }{\ell ^ {1/3}}\right)$
is \aas (infinite torsion-free) hyperbolic.
\end{cor}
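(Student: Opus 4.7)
The plan is a direct asymptotic substitution. I would plug the proposed $K(\ell)$ and $f(\ell)$ into the three summands on the left of \eqref{asterisk}, observe that they all share the common scale $\ell^{2/3}\log^{1/3}(\ell)$, and check that the combined leading coefficient is strictly negative.

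In more detail: with $K = c'\ell^{1/3}/\log^{1/3}(\ell)$ one has
\[
\log(K\ell) = \tfrac{4}{3}\log\ell - \tfrac{1}{3}\log\log\ell + \log c' = \tfrac{4}{3}\log\ell + O(\log\log\ell),
\]
and $K^{2} = c'^{2}\,\ell^{2/3}/\log^{2/3}(\ell)$, which together give
\[
3000\,K^{2}\log(K\ell) = 4000\,c'^{2}\,\ell^{2/3}\log^{1/3}(\ell) + O\!\left(\tfrac{\ell^{2/3}\log\log\ell}{\log^{2/3}\ell}\right).
\]
The middle summand is exact: $10^{4}\,\ell/K = (10^4/c')\,\ell^{2/3}\log^{1/3}(\ell)$. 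And the last summand is $\ell\sdot f(\ell) = c\,\ell^{2/3}\log^{1/3}(\ell)$. Adding,
\[
3000 K^2 \log(K\ell) + 10^{4}\tfrac{\ell}{K} - \ell\sdot f(\ell) = \Bigl(4000\,c'^{2} + \tfrac{10^{4}}{c'} - c\Bigr)\,\ell^{2/3}\log^{1/3}(\ell) + o\bigl(\ell^{2/3}\log^{1/3}(\ell)\bigr).
\]
By the standing hypothesis the leading coefficient is strictly negative, so the whole expression tends to $-\infty$. Condition \eqref{asterisk} is satisfied and Theorem~\ref{hyperbolicity} delivers \aas hyperbolicity.

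For the particular case $c > 10^{5}$, I would exhibit an admissible $c'$ by minimizing $g(c') := 4000c'^{2} + 10^{4}/c'$ over $c' > 0$. Calculus gives $c'^{*} = (5/4)^{1/3} \approx 1.08$, at which $g(c'^{*}) = 12000\,(5/4)^{2/3} < 1.4 \times 10^{4}$. Thus any $c > 10^{5}$ permits a valid choice of $c'$ (for instance $c' = (5/4)^{1/3}$), and the first part of the corollary applies.

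There is no real obstacle: the argument is essentially algebraic. The only bookkeeping points are that $K$ must be rounded to take values in $\N$ and that $\log(K\ell)$ carries a $\log\log\ell$ correction, but both perturbations are comfortably absorbed in the $o(\ell^{2/3}\log^{1/3}(\ell))$ error, and the $K \ge 10^{10}$ requirement of Lemma~\ref{isoperimetric-inequality} is automatic once $\ell$ is large since $K(\ell) \to \infty$.
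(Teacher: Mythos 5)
Your proposal is correct and follows essentially the same route as the paper: substitute $K$ and $f$ into \eqref{asterisk}, observe that all three summands have the common scale $\ell^{2/3}\log^{1/3}(\ell)$, and check the sign of the leading coefficient $4000c'^2 + 10^4/c' - c$. The only cosmetic differences are that you track the $O(\log\log\ell)$ correction explicitly and optimize $c'$ via calculus (finding $c'^*=(5/4)^{1/3}$, $g(c'^*)\approx 1.4\times 10^4$), whereas the paper simply bounds $\log(K\ell)\le\tfrac43\log\ell$ and exhibits the choice $c'=1$, $c=10^5$; both establish the same thing.
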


\begin{proof}[Proof of Theorem]
Observe that
\begin{align*}
P:=\Pr \left( \substack{\exists
\textrm{ a van Kampen diagram }D \\
\textrm{ that satisfies \PP but not \QQ}} \right)
& \leq \sum_{ \substack{\textrm{abstract diagrams } D \\ \textrm{ with \PP but not \QQ}}}
\Pr(D \textrm{ is fulfillable}) \\
& \leq  N^I(\ell) \cdot  (2m-1)^{ 10^4 \frac{\ell}{K} - \ell\sdot f(\ell)},
\end{align*}
where  $N^I(\ell)$ is as in Corollary~\ref{diagram-bound} 
and $(2m-1)^{ 10^4 \frac{\ell}{K} - \ell\sdot f(\ell)}$
is the fulfillability bound from Lemma \ref{fulfillability}.
(Note that the last inequality vastly overcounts by replacing
$\left[\textrm{\PP and not \QQ}\right]$ with simply \PP.)

We will show that the  local-global principle
(Lemma \ref{isoperimetric-inequality})  holds \aas for all diagrams,
by showing that for a $K,f$ pair as in the hypothesis, the above quantities go to zero.
In particular, we will show that $\log P\to -\infty$.

We have
$ \log P
\leq  \log N^I + 10^4 \frac{\ell}{K} - \ell\sdot f(\ell).$
By applying
Corollary~\ref{diagram-bound}, we have this asymptotically bounded above by
$$3000 K^2 \log(K\ell) +  10^4 \tfrac{\ell}{K} - \ell\sdot f(\ell).$$
Requiring that this goes to $-\infty$ is exactly \eqref{asterisk}.
\end{proof}

\begin{proof}[Proof of Corollary]
We calculate each of the four terms of \eqref{asterisk} using
$K=c'\ell^{1/3}\log^{-1/3}\ell$ and $f=c\ell^{-1/3}\log^{1/3}\ell$.  We have
\[\begin{cases}
3000K^2\log(K\ell) \le 4000 c'^2 \ell^{2/3} \log^{1/3}\ell  \ ;\\
  10^4  \frac \ell K =  \frac{10^4}{c'} \ell^{2/3}  \log^{1/3}\ell\ ;\\
   \ell f = c  \ell^{2/3}  \log^{1/3} \ell.
\end{cases}\]
Provided $4000c'^2 + \frac{10^4}{c'} < c$, the expression goes to $-\infty$
and \eqref{asterisk} is verified.
For example, we can choose $c'=1$ and $c=10^5$.
\end{proof}


\subsection{Hyperbolicity constant}

\begin{thm}[Effective hyperbolicity constant]\label{hyperbolicity-constant}
Suppose $X$ is a 2--complex that is geometrically finite, i.e., there is some $N$ such that 
every face has at most $N$ edges.  
Suppose there is $\kappa>1/N$ so that $X$ has a linear isoperimetric inequality for large-area loops:
if an edge loop $\gamma$ in $X$ has area $\geq 18\kappa^2N^2$, 
then $\gamma$ can be filled with at most $\kappa n$ cells.
Then the one-skeleton of $X$ 
has $\delta$--thin triangles for $\delta=120 \kappa^2 N^3$.
\end{thm}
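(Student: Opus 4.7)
The plan is to establish the Rips thin-triangles condition in $X^{(1)}$ via the standard implication ``linear isoperimetric inequality implies hyperbolicity,'' tracking constants to match the claimed bound. Let $T = a \cup b \cup c$ be a geodesic triangle in $X^{(1)}$ and fix a vertex $p \in a$; the goal is to bound $d(p, b \cup c) \leq \delta := 120 \kappa^2 N^3$. The first reduction handles the trivial case: if the perimeter of $T$ is below $18\kappa^2 N^2$, then every vertex of $T$ lies within distance $9\kappa^2 N^2 < \delta$ of every other, so we may assume $|T| \geq 18\kappa^2 N^2$ and the hypothesis applies to $T$ and to the natural sub-loops constructed below.

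The heart of the argument is a \emph{bigon width estimate}: if $\beta_1, \beta_2$ are geodesics from $x$ to $y$ in $X^{(1)}$ whose concatenation $\gamma$ is an edge loop of length at least $18 \kappa^2 N^2$, the hypothesis produces a disk diagram $D$ of area $|D| \leq \kappa |\gamma|$, and I would use a dual-graph column-counting argument to conclude that the combinatorial width $w := \max_{q \in \beta_1} d(q, \beta_2)$ satisfies $w \leq C_1 \kappa N^2$ for an explicit constant $C_1$. The intuition is that at a width-achieving point $q^*$, the shortest transverse path through $D$ to $\beta_2$ must cross at least $2w/N$ faces (since each face has diameter at most $N/2$ in its one-skeleton); summing over the positions along $\beta_1$ at which such a column can be constructed, together with the per-face boundary budget, forces $|D| \geq c\, w|\gamma|/N^2$, which when compared with $|D|\leq \kappa|\gamma|$ gives $w \leq C_1 \kappa N^2$.

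To pass from bigons to triangles I would introduce a median geodesic. For $T = [x,y]\cup[y,z]\cup[z,x]$ and $p \in [x,y]$, let $\sigma$ be any geodesic from $p$ to $z$. The arc $\sigma$ splits $T \cup \sigma$ into two edge bigons, each of perimeter at most $2|T|$ and hence above the isoperimetric threshold. The bigon estimate applied to each piece produces a point on $\sigma$ within distance $C_1 \kappa N^2$ of $[z,x]$ and another within the same distance of $[y,z]$. A further application of the isoperimetric inequality, to the loop cobounded by these two witness sub-arcs and the intervening portion of $\sigma$, localizes both witnesses near $p$ at a cost of an additional factor of order $\kappa N$, yielding $d(p, b\cup c) \leq 120\kappa^2 N^3$ once the constants from each step are tracked through.

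The main obstacle is the bigon width estimate itself: making the column-counting lower bound on $|D|$ rigorous in the presence of faces of heterogeneous size (merely $\leq N$) and dual paths that need not be disjoint requires careful combinatorial bookkeeping. The secondary obstacle is controlling the constants through the triangle reduction, and in particular justifying the extra factor of $\kappa N$ that bumps the bigon constant $\kappa N^2$ up to the triangle constant $\kappa^2 N^3$; making the leading constant come out at or below $120$ is routine but error-prone, and depends on using the hypothesis $\kappa > 1/N$ to absorb lower-order error terms.
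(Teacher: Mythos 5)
The paper's proof (and the statement itself) is a constant-tracking pass over Bridson--Haefliger~III.H.2.9: assume a triangle fails to be $6k$--thin, construct from it a geodesic hexagon~$\mathcal{H}$ whose filling area is bounded below by a ``fat core'' argument, compare with the linear isoperimetric upper bound $|\mathcal{H}|\le \kappa|\partial\mathcal{H}|$, and read off the inequality on~$\delta$. Your route --- a bigon-width lemma followed by a median-geodesic decomposition of the triangle --- is genuinely different, and it has a structural gap that I don't think is a matter of bookkeeping.

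The key problem is in the reduction from triangles to bigons. You state the bigon width estimate for two \emph{geodesics} $\beta_1,\beta_2$ with common endpoints. But when you draw the median geodesic $\sigma$ from $p\in[x,y]$ to $z$ and cut $T$ along it, the two resulting pieces are not geodesic bigons: each is a geodesic \emph{triangle} (e.g.\ $[x,p]\cup\sigma\cup[z,x]$), or, if you insist on viewing it as a bigon with sides $\sigma$ and $[x,p]\cup[z,x]$, then one of the two sides is a broken geodesic and the width lemma does not apply. To control the width of that broken-geodesic bigon you would need exactly the thin-triangles statement you are trying to prove --- so as written the argument is circular. The Bridson--Haefliger proof the paper follows avoids this by working directly with a hexagon built from the fat triangle and getting an area lower bound from a disk of definite radius inside the hexagon; no prior thinness of any sub-polygon is used.

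Two secondary issues are worth noting even if the circularity were resolved. First, the column-counting lower bound $|D|\ge c\,w|\gamma|/N^2$ is the crux of your bigon lemma and you only sketch it; the dual columns from nearby boundary vertices can coincide almost entirely in a thin diagram, and you would need a quantitative bound --- e.g.\ that each face, having at most $N$ edges, can carry at most $O(N)$ dual column-paths --- before the ``summing over positions'' step produces the stated inequality. (You flag this yourself, but it is a real proof obligation, not just bookkeeping.) Second, the final step ``localizes both witnesses near $p$ at a cost of an additional factor of order $\kappa N$'' is unargued: you have not said which loop you fill, why the two witness points must lie close to $p$ along $\sigma$, or how the extra factor of $\kappa N$ arises. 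In the paper's approach these difficulties are absorbed into a single hexagon estimate with explicit constants, which is why the bound $\delta>9\kappa N^2+108\kappa^2 N^3$ (and hence $\delta=120\kappa^2 N^3$, using $\kappa>1/N$) drops out cleanly.
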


\begin{proof}
	We closely follow the proof 
	in \cite[III.H.2.9]{BridsonHaefliger} (replacing $K$ by $\kappa$ to avoid notation clash).
	If there is a triangle which is not $6k=18\kappa N^2$--thin, one builds
	a hexagon $\mathcal{H}$ (or quadrilateral) whose minimal area filling has area
	$\geq \kappa(\alpha-2k) \geq \kappa(6k) = 18\kappa^2N^2$.
	So this hexagon satisfies our linear isoperimetric hypothesis
	$\mathrm{|\mathcal{H}|} \leq \kappa |\partial \mathcal{H}|$.
	The remainder of the proof shows that the hexagon is $\delta$--thin provided
	\[
		\frac{\delta-3k}{3N} > 12 k\kappa \ \Longrightarrow
		 \delta > 3k+36kN\kappa = 9\kappa N^2 + 108\kappa^2N^3 \geq 117 \kappa^2 N^3.\qedhere
	\]
\end{proof}

\begin{cor}
Our density one-half random groups are hyperbolic with $\delta=c \ell^{5/3}$.
\end{cor}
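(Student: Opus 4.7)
The plan is to apply Theorem~\ref{hyperbolicity-constant} to the Cayley 2--complex $X$ of a random group $G \in \H(f_{1/3,1/3})$, where $f_{1/3,1/3}(\ell) = c\log^{1/3}(\ell)/\ell^{1/3}$ with $c > 10^5$. Every face of $X$ has boundary length $\ell$, so we take $N = \ell$. The hyperbolicity proof above, via the local-global principle (Lemma~\ref{isoperimetric-inequality}), produces a.a.s.\ a linear isoperimetric inequality $|D| \leq \kappa\, |\partial D|$ for every reduced van Kampen diagram with $|D| \geq K^2$, where $\kappa = 10^4 K/\ell$ and $K = c'\ell^{1/3}\log^{-1/3}\ell$ for an appropriate constant $c'$ (e.g.\ $c' = 1$, as in the previous corollary).

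First I would verify the two hypotheses of Theorem~\ref{hyperbolicity-constant}. For $\kappa > 1/N$: substituting gives $\kappa = 10^4 c'\,\ell^{-2/3}\log^{-1/3}\ell$, which dominates $1/\ell$ for large $\ell$. For the linear isoperimetric inequality to apply to every loop of area at least $18\kappa^2 N^2$, I would check $K^2 \leq 18\kappa^2 N^2$; since $K^2 = c'^2\,\ell^{2/3}\log^{-2/3}\ell$ and $18\kappa^2 \ell^2 = 18 \cdot 10^8 c'^2\,\ell^{2/3}\log^{-2/3}\ell$, the inequality is comfortably satisfied (both are of the same order, but the right-hand side is larger by a fixed constant factor).

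Once both hypotheses hold, Theorem~\ref{hyperbolicity-constant} gives $\delta$--thin triangles in the Cayley graph of $G$ for $\delta = 120\kappa^2 N^3$. A direct substitution yields
\[
\delta = 120 \left( \tfrac{10^4 c'}{\ell^{2/3}\log^{1/3}\ell} \right)^{\!2} \ell^3 = 120 \cdot 10^8 c'^2 \cdot \frac{\ell^{5/3}}{\log^{2/3}\ell} \leq c\,\ell^{5/3}
\]
for a suitable absolute constant $c$, which proves the claim (with a slightly sharper $\log^{-2/3}\ell$ factor that we can discard).

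I do not anticipate a genuine obstacle: the heavy lifting was done in the proof of hyperbolicity (producing the linear isoperimetric inequality a.a.s.) and in the abstract Theorem~\ref{hyperbolicity-constant}. The only thing that requires care is matching the area threshold $K^2$ at which Lemma~\ref{isoperimetric-inequality} activates with the area threshold $18\kappa^2 N^2$ at which Theorem~\ref{hyperbolicity-constant} needs the inequality; the calculation above shows these align correctly for the chosen $K$, and in fact both thresholds are of the same order $\ell^{2/3}/\log^{2/3}\ell$, which is precisely why this $K$ is the natural choice for obtaining the best effective $\delta$.
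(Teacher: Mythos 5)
Your proof is correct and follows essentially the same route as the paper: take $N=\ell$, read off $\kappa$ from the linear isoperimetric inequality $|\partial D|\ge\frac{\ell}{10^4 K}|D|$ supplied by Lemma~\ref{isoperimetric-inequality}, confirm that the area threshold $K^2$ at which that inequality becomes available is below the threshold $18\kappa^2N^2$ required by Theorem~\ref{hyperbolicity-constant}, and substitute into $\delta = 120\kappa^2N^3$ to get $\delta\asymp\ell^{5/3}$ (up to a negative power of $\log\ell$, which you rightly discard). One small remark: you use $K=c'\ell^{1/3}\log^{-1/3}\ell$, which is the value that actually appears in (and satisfies) the earlier corollary to Theorem~\ref{hyperbolicity}; the paper's own proof of this corollary writes $K=\ell^{1/3}\log^{-2/3}(\ell)$, which does not satisfy~\eqref{asterisk} and appears to be a slip—your choice is the right one, and the final bound $\delta=c\ell^{5/3}$ is unaffected either way.
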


By contrast, as noted above, the best known hyperbolicity constant for $d<1/2$ is proportional to $\ell$.  

\begin{proof}[Proof of Corollary]
The output of the local-to-global principle was the linear isoperimetric inequality 
$|\partial D| \ge  \frac{\ell}{10^4 K}|D|$ and to get the needed case we used
$K(\ell)= \frac{\ell ^ {1/3}}{\log^{2/3} (\ell) }$.
This gives $|D|\le c'' \ell^{-2/3}\log^{-2/3}(\ell) \sdot |\partial D| \le c'' \ell^{-2/3} |\partial D|$, so we take $\kappa = c'' \ell^{-2/3}$ and $N=\ell$.
This linear isoperimetric inequality holds for all diagrams $D$ of size
$|D| \geq K^2$; observe that 
$18\kappa^2 N^2 = 18 (c'' \ell^{-2/3})^2  \ell^2 \geq K^2$ for large $K$.
Therefore, Theorem~\ref{hyperbolicity-constant} gives that all triangles are $\delta$--thin
for a value of $\delta$ proportional to $\ell^{5/3}$.
\end{proof}


\end{document}